\newtheorem{proposition}{Proposition}[section]
  \newtheorem{theorem}[proposition]{Theorem}
  \newtheorem{lemma}[proposition]{Lemma}
\theoremstyle{definition}
  \newtheorem{definition}[proposition]{Definition}
\newcommand{\cst}{\ifmmode\mathrm{C}^*\else{$\mathrm{C}^*$}\fi}
\newcommand{\st}{\;\vline\;} % such that
\newcommand{\tens}{\otimes} % tensor product
\newcommand{\atens}{\odot} % algebraic tensor product
\newcommand{\vtens}{\;\!\bar{\tens}\;\!} % von Neuman lagebra tensor product
\newcommand{\dtens}{\;\!\dot{\tens}\;\!} % class of tensor after separation/completion
\newcommand{\I}{\mathds{1}}
\newcommand{\flip}{\boldsymbol{\sigma}}
\newcommand{\comp}{\circ}
\newcommand{\ph}{\varphi}
\newcommand{\eps}{\varepsilon}
\newcommand{\id}{\mathrm{id}}
\newcommand{\qqquad}{\quad\qquad}
\newcommand{\hh}[1]{\widehat{#1}}
\newcommand{\scp}[2]{\left\langle#1,#2\right\rangle}
\newcommand{\GG}{\mathbb{G}}
\newcommand{\HH}{\mathbb{H}}
\newcommand{\CC}{\mathbb{C}}
\newcommand{\uu}{{\scriptscriptstyle\mathrm{u}}}
\newcommand{\nc}{{\text{---}\scriptscriptstyle{\|\cdot\|}}}
\newcommand{\op}{{\scriptscriptstyle\mathrm{op}}}
\newcommand{\sH}{\mathsf{H}}
\newcommand{\sM}{\mathsf{M}}
\newcommand{\sK}{\mathsf{K}}
\newcommand{\sN}{\mathsf{N}}
\newcommand{\cE}{\mathcal{E}}
\newcommand{\cP}{\mathcal{P}}
\newcommand{\cI}{\mathcal{I}}
\newcommand{\cF}{\mathcal{F}}
\newcommand{\Jnd}{\mathcal{J}}
\newcommand{\gN}{\mathfrak{N}}
\newcommand{\ww}{\mathrm{W}}
\newcommand{\vv}{\mathrm{V}}
\newcommand{\WW}{{\mathds{V}\!\!\text{\reflectbox{$\mathds{V}$}}}}
\newcommand{\Ww}{\mathds{W}}
\newcommand{\wW}{\text{\reflectbox{$\Ww$}}\:\!}
\DeclareMathOperator{\C}{C}
\DeclareMathOperator{\M}{M}
\DeclareMathOperator{\Mor}{Mor}
\DeclareMathOperator{\B}{B}
\DeclareMathOperator{\cL}{\mathcal{L}}
\DeclareMathOperator{\cK}{\mathcal{K}}
\DeclareMathOperator{\Linf}{\mathnormal{L}^\infty\!\!\;}
\DeclareMathOperator{\Ltwo}{\mathnormal{L}^2\!\!\;}
\DeclareMathOperator{\Lone}{\mathnormal{L}^1\!\!\;}
\DeclareMathOperator{\Rep}{Rep}
\DeclareMathOperator{\Lin}{\overline{span}}
\DeclareMathOperator*{\btens}{\,\tens\,} % balanced tensor product
\newcommand{\pil}{\pi_l} % or {\pi_{\textrm{\tiny{l}}}}
\newcommand{\pir}{\pi_r} % or {\pi_{\textrm{\tiny{r}}}}
\newcommand{\tilpil}{\widetilde{\pi}_l} % or {\widetilde{\pi}_{\textrm{\tiny{r}}}}
\newcommand{\corresp}[3]{\raisebox{-1ex}[0ex][1.3ex]{\scriptsize $#2$}\hspace{0.3ex}
  \framebox{$#1$}\hspace{0.3ex}
  \raisebox{-1ex}[0ex][1.3ex]{\scriptsize $#3$}}
\newcommand{\bicorresp}[4]{\raisebox{-1ex}[0ex][1.3ex]{\scriptsize $#2$}\hspace{0.3ex}
  \overset{#4}{\framebox{$#1$}}\hspace{0.3ex} \raisebox{-1ex}[0ex][1.3ex]{\scriptsize $#3$}}
\DeclareMathOperator{\Ind}{Ind}
\DeclareMathOperator{\IndR}{Ind_{\text{\tiny{\rm{R}}}}}
\DeclareMathOperator{\IndK}{Ind_{\text{\tiny{\rm{K}}}}}
\DeclareMathOperator{\IndV}{Ind_{\text{\tiny{\rm{V}}}}}
\numberwithin{equation}{section}
\begin{document}

\author{Mehrdad Kalantar}
\address{Department of Mathematics, University of Houston, Houston, TX 77204, USA}
\email{kalantar@math.uh.edu}

\author{Pawe{\l} Kasprzak}
\address{Department of Mathematical Methods in Physics, Faculty of Physics, University of Warsaw, Poland}
\email{pawel.kasprzak@fuw.edu.pl}

\author{Adam Skalski}
\address{Institute of Mathematics of the Polish Academy of Sciences, ul. \'Sniadeckich 8, 00-656 Warszawa, Poland}
\email{a.skalski@impan.pl}

\author{Piotr So{\l}tan}
\address{Department of Mathematical Methods in Physics, Faculty of Physics, University of Warsaw, Poland}
\email{piotr.soltan@fuw.edu.pl}

\begin{abstract}
In this paper we revisit the theory of induced representations in the setting of locally compact quantum groups. In the case of induction from open quantum subgroups, we show that constructions of Kustermans and Vaes are equivalent to the classical, and much simpler, construction of Rieffel. We also prove in general setting the continuity of induction in the sense of Vaes with respect to weak containment.
\end{abstract}

\title[Induction for locally compact quantum groups]{Induction for locally compact quantum groups revisited}

\keywords{Locally compact quantum group, induction of representations}

\subjclass[2010]{Primary: 22D30, 46L89, Secondary: 22D25, 20G42}

% 22D30: Induced representations
% 22D25: $C^*$-algebras and $W^*$-algebras in relation to group representations
% 46L89: Other "noncommutative'' mathematics based on $C^*$-algebra theory
% 20G42: Quantum groups (quantized function algebras) and their representations
% 20C99: None of the above, but in this section (20Cxx Representation theory of groups)

\maketitle

\section{Introduction}

The theory of induced representations, allowing one to manufacture in a canonical way a representation of a group out of that of a subgroup, dates back to the work of Frobenius for finite groups and was later developed by Mackey for locally compact groups. It has played a very important role in developing the general theory of group representations and has seen many applications in various areas of abstract harmonic analysis (we refer to the recent book \cite{KaniuthTaylor} for a precise description of the construction, the historical background and an exhaustive list of references). From the modern point of view a very important point in the history of the subject was the paper of Rieffel (\cite{rief1}), which phrased the construction in the language of Hilbert \cst-modules. This led to later generalizations of similar induction procedures to several operator algebraic contexts.

Thus it was natural that  in the beginning of the 21st century, with the widely accepted notion of locally compact \emph{quantum} groups emerging in the work of Kustermans and Vaes (\cite{KV}),  an interest has developed in the study of the theory of induced representations in the quantum context. In fact the two relevant papers were written respectively by Kustermans (\cite{KustermansInduced}) and Vaes (\cite{Vaes-induction}). In the first of these the construction was presented under the assumption of integrability of the natural action of the quantum subgroup on the larger quantum group (automatically satisfied for classical groups -- and later shown to be always true also in the quantum context in \cite{DeCommer}), in the second an alternative approach was proposed, explicitly disposing of the integrability constraint and also allowing for a more general framework of representations on Hilbert \cst-modules, as opposed to Hilbert spaces. Both constructions of Kustermans and Vaes are technically complicated; indeed even the classical procedure of Mackey in general requires overcoming certain measure-theoretic and topological difficulties.

The starting point of our paper is a classical observation that in the classical case if the subgroup from which we induce the representation is open, most of the technical problems disappear (see again \cite{KaniuthTaylor}). More importantly, one can apply then a very straightforward version of the Rieffel procedure, simply observing that if $H$ is an open subgroup of a locally compact group $G$, then one has a natural embedding of the group \cst-algebras $\cst(H)\subset\cst(G)$, which admits also a canonical conditional expectation $E::\cst(G)\to\cst(H)$. In a recent work \cite{KKS} three of the authors of the current paper have introduced the notion of an \emph{open quantum subgroup} of a locally compact quantum group $\GG$. One of the main results of \cite{KKS} and Section \ref{sect:RepTh} of this paper show that if $\HH$ is an open quantum subgroup of $\GG$, one still has an embedding $\cst(\HH)\subset\cst(\GG)$ and a canonical conditional expectation $E:\cst(\GG)\to\cst(\HH)$ (we write these relations as $\C_0^\uu(\hh{\HH})\subset\C_0^\uu(\hh{\GG})$ and $E:\C_0^\uu(\hh{\GG})\to\C_0^\uu(\hh{\HH})$). This opens the way to inducing a unitary representation of $\GG$ (in other words a representation of the \cst-algebra $\cst(\GG)$) from a unitary representation of $\HH$ (in other words a representation of the \cst-algebra $\cst(\HH)$) using the simple method of Rieffel and raises a natural question whether this leads to an object isomorphic to that obtained from the same starting data via the procedures of Kustermans and Vaes. This is the topic of this paper.

Using Kustermans's induction, we first prove the existence of the canonical embedding $\cst(\HH)\subset\cst(\GG)$ for any open quantum subgroup $\HH$ of a locally compact quantum group $\GG$, hence in particular dropping the coamenability assumption required in \cite[Section 5]{KKS}. This means one can in that case always induce a representation of $\GG$ (on a Hilbert \cst-module) from a representation of $\HH$ via Rieffel's method. We then prove that in fact in this case the resulting representation is canonically isomorphic to the ones obtained either via Kustermans's or Vaes's procedures. The key tool for that is an imprimitivity-type result, Theorem \ref{condition}, allowing us to identify Rieffel-induced representations of the larger quantum group via the existence of a covariant representation of the algebra of functions on the quantum homogeneous space $\GG/\HH$.

We note that in \cite[Section 11]{Vaes-induction} it was asserted that one could show that induction procedures of \cite{Vaes-induction} and \cite{KustermansInduced} are unitary equivalent in general, but that a proof would be highly non-trivial and would need to use the complete machinery of modular theory and properties of the unitary implementation of the action of the larger quantum group on the quantum homogeneous space. Our results provide a complete understanding of the situation in the case of open quantum subgroups. In particular, in this setting, the induction process (in any of the above senses) is rather simple, and does not require the more technical aspects of \cst-algebra or quantum group theories. It is worth recalling that, unsurprisingly, quantum subgroups of discrete quantum groups are always open and note that a simplified picture of the Vaes induction procedure in the discrete case was studied in \cite{VergniouxVoigt}.

The detailed plan of the paper is as follows: after introducing general notation in the last part of this section, in Section \ref{sect:Prel} we recall the basics of Hilbert \cst-modules and Rieffel's induction, introduce terminology and fundamental facts pertaining to locally compact quantum groups and discuss in reasonable detail the induction procedures developed in this context by Kustermans and Vaes. Section \ref{sect:RepTh} contains a generalization of Theorem 5.2 of \cite{KKS}, dropping the coamenability assumption for the inclusion $\C_0^\uu(\hh{\HH})\subset\C_0^\uu(\hh{\GG})$ (with $\HH$ being an open quantum subgroup of $\GG$) and thus opening the way to the Rieffel construction of induced representations in this context. A short Section \ref{Sec:imprim} establishes an imprimitivity type result, offering a method to recognize representations induced in the sense of Rieffel; this theorem is then applied in Section \ref{sect:equiv} to obtain the main results of the paper, namely the unitary equivalence of all the three induction procedures for the case of open quantum subgroups. A short appendix shows the continuity of the Vaes induction in the general case.

All scalar products are linear on the right, i.e.~in the second variable. The symbol $\tens$ denotes the completed Hilbert-space/Hilbert \cst-module/\cst-algebraic-minimal tensor product, and the algebraic tensor products will be denoted by $\atens$. The ultrawewak tensor product of von Neumann algebras will be denoted by $\vtens$. For a \cst-algebra $A$ we denote its multiplier algebra by $\M(A)$ and if $B$ is another \cst-algebra we write $\Mor(A,B)$ for the set of all nondegenerate $*$-homomorphisms from $A$ to $\M(B)$. Elements of $\Mor(A,B)$ are called \emph{morphisms} from $A$ to $B$ (cf.~\cite{Wo1}).
We will often use the leg numbering notation for operators acting on (multiple) tensor products of Hilbert spaces and elements of tensor products of algebras; $\flip$ will usually denote the tensor flip. For a set $X\subset\B(\sH)$, where $\sH$ is a Hilbert space, $X'$ denotes the commutant of $X$. Given a normal semifinite (n.s.f.) weight $\theta$ on a von Neumann algebra $\sN$ we write $\gN_\theta=\bigl\{x\in\sN\st\theta(x^*x)<+\infty\bigr\}$, $\Ltwo(\sN, \theta)$ for the associated GNS Hilbert space and $\Lambda_\theta:\gN_\theta\to\Ltwo(\sN,\theta)$ for the associated GNS map. Finally for a Hilbert space $\sH$ and two vectors $\xi,\eta\in\sH$ we write $\omega_{\xi,\eta}$ for the functional in $\B(\sH)_*$ given by $T\mapsto\scp{\xi}{T\eta}$ and $\omega_\xi=\omega_{\xi,\xi}$.

Finally a word on the quantum group notation is in order; although in \cite{KKS} we adopted the conventions of \cite{BaajSkandalis}, working with right Haar weights and right multiplicative unitaries as primary objects, here, mainly for compatibility with \cite{Vaes-induction} we stick to the left Haar weights, left multiplicative unitaries, etc.

\subsection*{Acknowledgement}

The third author was partially supported by the NCN (National Science Centre) grant 2014/14/E/ST1/00525. The second and fourth authors were partially supported by NCN (National Science Centre) grant no.~2015/17/B/ST1/00085.

\section{Preliminaries}\label{sect:Prel}

In this section we introduce basic facts and notations needed in the main body of the paper.

\subsection{Modules, correspondences and induction in the sense of Rieffel}

Let $B$ be a \cst-algebra. By a \emph{(Hilbert)\cst-module} over $B$ (usually called simply a \emph{Hilbert $B$-module}) we understand a right module $\cE$ over $B$ equipped with a $B$-valued scalar product satisfying natural requirements (see \cite{Lance}); by $\cL(\cE)$ we denote the \cst-algebra of \emph{adjointable} operators on $\cE$. Every pair of vectors $\xi,\eta\in\cE$ defines an operator $\Theta_{\xi,\eta}\in\cL(\cE)$ by the formula
\[
\Theta_{\xi,\eta}(\zeta)=\xi\scp{\eta}{\zeta},\qqquad\zeta\in\cE.
\]
The closure of the span of operators of the form $\Theta_{\xi,\eta}$ forms a \cst-subalgebra of $\cL(\cE)$ called the algebra of \emph{compact} operators on $\cE$ and denoted by $\cK(\cE)$. We then have $\cL(\cE)=\M(\cK(\cE))$. If in addition we have a non-degenerate $*$-homomorphism from another $\cst$-algebra $A$ to $\cL(\cE)$ (in other words, a \emph{representation of $A$ on $\cE$}) we call $\cE$ an \emph{$A$-$B$-correspondence}. We will sometimes use without any comment the internal tensor product of \cst-correspondences and the external product of Hilbert modules. In particular if $A$ and $B$ are \cst-algebras and $\cE$ is a Hilbert $B$-module then $A\tens\cE$ denotes the natural completed Hilbert $(A\tens{B})$-module (cf.~\cite[Chapter 4]{Lance}).

We will also occasionally need the notion of von Neumann modules, replacing \cst-algebras with von Neumann algebras and adding normality conditions for the respective actions (see the Appendix of \cite{Vaes-induction} for the details). A key notion, also introduced in \cite{Vaes-induction}, is that of a strict $*$-homomorphism: let $\sM$ be a von Neumann algebra and $\cE$ a Hilbert module over a \cst-algebra $B$. A map $\pi:\sM\to\cL(\cE)$ is \emph{strict} if it is continuous with respect to the strong${}^*$ topology on the unit ball of $\sM$ and strict topology on $\cL(\cE)$ (cf.~\cite[Page 11]{Lance}).

Suppose $A$ is a \cst-algebra, $B$ is a \cst-subalgebra of $A$ and and $E:A\to{B}$ is a \emph{conditional expectation} (i.e.~a norm-one projection -- it is automatically completely positive). We will now recall Rieffel's process of inducing a representation of $A$ from a representation of $B$ (on a Hilbert module) presented in \cite{rief1}.

Let $\pi:B \to\cL(\cE)$ be a representation of $B$ on a Hilbert module $\cE$ over some auxiliary \cst-algebra $C$. On the $B$-balanced algebraic tensor product $A\atens_B\cE$ define an $A$-valued inner product:
\[
\scp{a\tens{v}}{b\tens{w}}=\scp{v}{E(a^*b)w},\qqquad{a,b}\in{A},\:v,w\in\cE.
\]
Then the action of $A$ on $A\atens_B\cE$ defined by $a(b\tens{v})=ab\tens{v}$ gives a representation $\IndR(\pi)$ of $A$ on the \cst-$C$-module $\IndR(\cE)$ obtained from $A\atens_B\cE$ via the usual separation/completion procedure. The representation $\IndR(\pi)$ is called the representation \emph{induced from $\pi$ in the sense of Rieffel}. The above construction can be easily phrased in the language of internal tensor product of Hilbert \cst-modules.

\subsection{Locally compact quantum groups}

As explained in the introduction we will use here the \emph{left} conventions of \cite{Vaes-induction}. Thus $\GG$ denotes a \emph{locally compact quantum group} in the sense of \cite{KV}, a virtual object studied via the associated operator algebras: the von Neumann algebra $\Linf(\GG)$ (``essentially bounded measurable functions on $\GG$''), the \cst-algebra $\C_0(\GG)\subset\Linf(\GG)$ (``continuous functions on $\GG$ vanishing at infinity''), and its universal version $\C_0^\uu(\GG)$. Each of these is equipped with a \emph{coproduct}: we have a unital normal coassociative $*$-homomorphism $\Delta_\GG: \Linf(\GG)\to\Linf(\GG)\vtens\Linf(\GG)$ which restricts to $\Delta_\GG\in\Mor(\C_0(\GG),\C_0(\GG)\tens\C_0(\GG))$ and also a corresponding $*$-homomorphism $\Delta_\GG^\uu\in\Mor(\C_0^\uu(\GG),\C_0^\uu(\GG)\tens\C_0^\uu(\GG))$. The canonical surjective morphism from $\C_0^\uu(\GG)$ onto $\C_0(\GG)$ will be denoted by $\Lambda_\GG$. If $\Lambda_\GG$ is injective, we say that $\GG$ is \emph{coamenable}. The \emph{left} (respectively, \emph{right}) Haar weight on $\Linf(\GG)$ will be denoted by $\ph_\GG$ (respectively, $\psi_\GG$) and $\Ltwo(\GG)$ will denote the GNS Hilbert space of the left Haar weight. We will always assume that $\Linf(\GG)$ and $\C_0(\GG)$ are represented on $\Ltwo(\GG)$. The key object carrying all the information about $\GG$ is the \emph{left regular representation} $\ww^\GG\in\B(\Ltwo(\GG)\tens\Ltwo(\GG))$ implementing the comultiplication:
\[
\Delta_\GG(x)=(\ww^\GG)^*(\I\tens{x})\ww^\GG,\qqquad{x}\in\Linf(\GG).
\]
This operator is also called the \emph{Kac-Takesaki operator} or, less formally, the \emph{multiplicative unitary} of $\GG$. It carries also the information about the \emph{dual locally compact quantum group} $\hh{\GG}$ (see \cite[Section 8]{KV}): on one hand we have $\C_0(\hh{\GG})= \bigl\{(\omega\tens\id)(\ww^\GG)\st\omega\in\B(\Ltwo(\GG))_*\bigr\}^{\nc}$, and on the other $\ww^\GG\in\Linf(\GG)\vtens\Linf(\hh{\GG})$ (or more precisely $\ww^\GG\in\M(\C_0(\GG)\tens\C_0(\hh{\GG}))$). Note that in particular $\Linf(\hh{\GG})=\C_0(\hh{\GG})''$ is canonically represented on $\Ltwo(\GG)$. Moreover the multiplicative unitary associated with $\hh{\GG}$ is given by formula $\ww^{\hh{\GG}}=\flip(\ww^\GG)^*$.

The element $\ww^\GG\in\M(\C_0(\GG)\tens\C_0(\hh{\GG}))$ admits a universal version, $\WW^\GG\in\M(\C_0^\uu(\GG)\tens\C_0^\uu(\hh{\GG}))$, such that $\ww^\GG= (\Lambda_{\GG}\tens\Lambda_{\hh{\GG}})(\WW^\GG)$. We may also consider natural ``one-sided'' reduced/universal versions, $\wW^\GG=(\Lambda_{\GG}\tens\id)(\WW^\GG)$ and $\Ww^\GG=(\id\tens\Lambda_{\hh{\GG}})(\WW^\GG)$. The predual of $\Linf(\GG)$ is denoted $\Lone(\GG)$; it is a Banach algebra in a canonical way and we have a canonical left regular representation $\lambda^\uu:\Lone(\GG)\to\C_0^\uu(\hh{\GG})$ given by the formula $\lambda^\uu(\omega)=(\id\tens\omega)(\Ww^\GG)$. The Banach algebra $\Lone(\GG)$ admits a dense subalgebra $\Lone_{\#}(\GG)$ which carries a natural involution, and the map $\lambda^\uu$ restricted to the latter becomes a $*$-homomorphism of Banach $*$-algebras. Occasionally we will also need the \emph{right multiplicative unitary} $\vv^\GG\in\Linf(\hh{\GG})'\vtens\Linf(\GG)$, defined as $\vv^\GG=%(\hh{J}\tens{J})\ww^{\hh{\GG}}(\hh{J}\tens{J})$  $
(\hh{J}\otimes \hh{J})W^{\hat\GG}(\hh{J}\otimes \hh{J})$, where $\hh{J} $ %and $J$ 
denotes 
%respectively 
the modular conjugations associated with the pair% $(\Linf(\hh{\GG}),\ph_{\hh{\GG}})$ and 
$(\Linf(\GG),\ph_\GG)$, and its dual version $\vv^{\hh{\GG}}\in\Linf(\GG)'\vtens\Linf(\hh{\GG})$.

A \emph{unitary representation of $\GG$} on a Hilbert module $\cE$ (usually shortened to simply ``representation'') is a unitary $U\in\M(\C_0(\GG)\tens\cK(\cE))=\cL(\C_0(\GG)\tens\cE)$ such that $(\Delta_\GG\tens\id)U=U_{13}U_{23}$. We then write $U\in\Rep(\GG,\cE)$. Unitary representations of $\GG$ are in one-to-one correspondence with representations of the \cst-algebra $\C_0^\uu(\hh{\GG})$: if $\phi$ is a representation of $\C_0^\uu(\hh{\GG})$ on $\cE$ then
\[
U=(\id\tens\phi)(\wW^\GG)
\]
is a representation of $\GG$ on $\cE$ and every $U\in \Rep(\GG,\cE)$ comes from this construction for a unique representation $\phi_U$ of $\C_0^\uu(\hh{\GG})$.

Given two representations $U$ and $V$ on Hilbert modules $\cE_U$ and $\cE_V$ (over the same \cst-algebra) we say that $U$ is \emph{contained} in $V$ if there exists a projection $P\in\cL(\cE_V)$ such that $U\cong(\id\tens{P})V(\id\tens{P})$, where $\cong$ denotes the self-explanatory relation of unitary equivalence. We say that $U$ is \emph{weakly contained} in $V$ if $\ker{\phi_V}\subset\ker{\phi_U}$; we denote it by writing $U\preccurlyeq V$.

If $\sM$ is a von Neumann algebra then by a (right) action of $\GG$ on $\sM$ we understand an injective unital normal $*$-homomorphism $\alpha:\sM\to\sM\vtens\Linf(\GG)$ satisfying the \emph{action equation} $(\alpha\tens\id)\comp\alpha=(\id\tens\Delta_\GG)\comp\alpha$. If we write $\sM$ in the form $\Linf(\mathbb{X})$ for some classical or ``quantum'' space $\mathbb{X}$ we speak simply about actions of $\GG$ on $\mathbb{X}$. The \emph{crossed product} of $\sM$ by the action $\alpha$, denoted $\sM\rtimes_\alpha\GG$, is the von Neumann algebra generated inside $\sM\vtens\B(\Ltwo(\GG)$ by $\alpha(\sM)$ and $\I\tens\Linf(\hh{\GG})$. Actions and crossed products admit also natural left versions. It was shown in \cite{Vaes-implementation} that every left action $\alpha$ of a locally compact quantum group $\GG$ on a von Neumann algebra $\sM$ admits a \emph{canonical unitary implementation} $\Upsilon\in\Linf(\GG)\vtens\B(\sH)$, where $\sH$ is a space on which $\sM$ acts in the standard form. This means that
\[
\alpha(m)=\Upsilon^*(\I\tens{m})\Upsilon,\qqquad{m}\in\sM.
\]
Moreover $\Upsilon$ is a representation of $\GG$ on $\sH$

We will also need the notion of \emph{\cst-algebraic actions of locally compact quantum groups} and of \emph{(compatible) actions of locally compact quantum groups on Hilbert modules}, both in the \cst-algebraic and von Neumann algebraic settings. Here again we refer to the Appendix of \cite{Vaes-induction} for the details, recalling only that by a compatible (right) action of $\GG$ on a Hilbert $B$-module $\cE$ for a \cst-algebra $B$ we understand a pair of maps $\alpha_B:B\to\M(B\tens\C_0(\GG))$ and $\alpha_\cE:\cE\to\M(\cE\tens\C_0(\GG))$ satisfying natural compatibility conditions (\cite[Definition 12.2]{Vaes-induction}.

\subsection{Closed and open quantum subgroups}\label{Closed/open}

Given two locally compact quantum groups $\GG$ and $\HH$, a morphism $\Pi$ from $\HH$ to $\GG$ (written $\Pi:\HH\to\GG$) is represented via either of the following three objects
\begin{itemize}
\item a \emph{Hopf $*$-homomorphism}, i.e.~an element $\pi^\uu\in \Mor(\C_0^\uu(\GG),\C_0^\uu(\HH))$ intertwining the respective coproducts:
\[
(\pi^\uu\tens\pi^\uu)\comp\Delta_{\GG}^\uu=\Delta_{\HH}^\uu\comp\pi^\uu;
\]
\item a \emph{bicharacter} from $\HH$ to $\GG$, i.e.~a unitary $V\in\M(\C_0(\HH))\tens\C_0(\hh{\GG}))$ such that
\[
\begin{split}
(\id\tens\Delta_{\hh{\GG}})V&=V_{13}V_{12},\\
(\Delta_{\HH}\tens\id)V&=V_{13}V_{23};
\end{split}
\]
\item a \emph{right quantum group homomorphism}, i.e.~an action $\alpha:\Linf(\GG)\to\Linf(\GG)\vtens\Linf(\HH)$ of $\HH$ on $\GG$ such that
\[
(\Delta_\GG\tens\id)\comp\alpha=(\id\tens\alpha)\comp\Delta_\GG.
\]
\end{itemize}
The relationships between Hopf $*$-homomorphisms, bicharacters and right quantum group homomorphisms are described in \cite{SLW12,DKSS} (note there is a right/left change in the notation in the treatment of bicharacters above). One of these is that $V=(\Lambda_{\HH}\comp\pi\tens\id)(\Ww^{\GG})$, another is
\[
\alpha(x)=V^*(\I\tens{x})V,\qqquad{x}\in\Linf(\GG).
\]

Each homomorphism $\Pi:\HH\to\GG$ admits a unique \emph{dual} $\hh{\Pi}:\hh{\GG}\to\hh{\HH}$. If $\pi^\uu$ is the Hopf $*$-homomorphism describing $\Pi$ then the Hopf $*$-homomorphism corresponding to $\hh{\Pi}$ is denoted by $\hh{\pi}^\uu$. It is uniquely determined by the relation
\begin{equation}\label{WW}
(\id\tens\hh{\pi}^\uu)(\WW^{\HH})=(\pi^\uu\tens\id)(\WW^{\GG}).
\end{equation}
On the level of bicharacters $\hh{\Pi}$ is described by $\hh{V}=\flip(V)^*$.

A Hopf $*$-homomorphism $\pi^\uu\in\Mor(\C_0^\uu(\GG),\C_0^\uu(\HH))$ may admit a \emph{reduced version}, i.e.~an element $\pi\in\Mor(\C_0(\GG),\C_0(\HH))$ such that $\Lambda_\HH\comp\pi^\uu=\pi\comp\Lambda_\GG$. It may then happen that $\pi$ admits an extension to a normal $*$-homomorphism $\Linf(\GG)\to\Linf(\HH)$. Our notation will not distinguish between $\pi$ and its extension to $\Linf(\GG)$.

\begin{definition}\label{Def:Sbgrp}
A homomorphism from $\HH$ to $\GG$ described by a Hopf $*$-homomorphism $\pi^\uu\in\Mor(\C_0^\uu(\GG),\C_0^\uu(\HH))$ identifies $\HH$ with a closed quantum subgroup of $\GG$ (in the sense of Vaes) if there exists a reduced version $\hh{\pi}$ of $\hh{\pi}^\uu$ which extends to an injective normal map $\Linf(\hh{\HH})\to\Linf(\hh{\GG})$. We often simply say that $\HH$ is a closed quantum subgroup of $\GG$.
\end{definition}

In the situation of Definition \ref{Def:Sbgrp} the normal injection $\Linf(\hh{\HH})\to\Linf(\hh{\GG})$ automatically intertwines comultiplications and in fact existence of such an injection is equivalent to $\HH$ being a closed quantum subgroup of $\GG$. Moreover on the level of $\C_0^\uu(\GG)$ the map $\pi^\uu$ is then a surjective $*$-homomorphism $\C_0^\uu(\GG)\to\C_0^\uu(\HH)$.

Let $\HH$ be a closed quantum subgroup of $\GG$. The \emph{(measured) quantum homogeneous space} $\GG/\HH$ is defined by setting
\[
\Linf(\GG/\HH)=\bigl\{x\in\Linf(\GG)\st\alpha(x)=x\tens\I\bigr\},
\]
where $\alpha$ is the right quantum group homomorphism associated to the inclusion $\HH\hookrightarrow\GG$. The coproduct $\Delta_\GG$ restricts to a left action of $\GG$ on $\GG/\HH$. We denote this restriction by $\rho_{\GG/\HH}$.

\begin{definition}\label{Definition:open}
A homomorphism from $\HH$ to $\GG$ corresponding to a Hopf $*$-homomorphism $\pi^\uu\in\Mor(\C_0^\uu(\GG),\C_0^\uu(\HH))$ identifies $\HH$ with an open quantum subgroup of $\GG$ if there exists a reduced version $\pi$ of $\pi^\uu$ which extends to a surjective normal map $\pi:\Linf(\GG)\to\Linf(\HH)$. We often simply say that $\HH$ is an open quantum subgroup of $\GG$.
\end{definition}

An open quantum subgroup of a locally compact quantum group is automatically closed (\cite[Section 3]{KKS}).

The key object when dealing with open quantum groups is the \emph{central support} (in the terminology of \cite{KKS}, in literature it is often called the \emph{central cover}) of the normal surjective morphism $\pi:\Linf(\GG)\to\Linf(\HH)$. It is the smallest central projection mapped to $\I$ by $\pi$ and it is denoted by $\I_\HH$. It was shown in \cite{KKS} that $\I_\HH$ is a \emph{group-like projection}, i.e.~$\Delta_\GG(\I_\HH)(\I_\HH\tens\I)=\I_\HH\tens\I_\HH$. Moreover we can then identify $\Ltwo (\HH)$ with $\I_\HH\Ltwo(\GG)$ and $\Linf(\HH)$ with $\I_\HH\Linf(\GG)$; we will do so without further comment. We call $\I_\HH$ simply the \emph{support projection of $\HH$} and note that it belongs to $\Linf(\GG/\HH)$.

\subsection{Induction in the sense of Kustermans}\label{kusrep}

In this subsection we briefly recall Kustermans's notion of induction for unitary representations of closed quantum subgroups of locally compact quantum groups on Hilbert spaces. We focus on establishing the notation and terminology; for the details of the construction we refer the reader to \cite{KustermansInduced}.

Let $\GG$ be a locally compact quantum group and $\HH$ a closed quantum subgroup of $\GG$. Let $U\in \Linf(\HH)\vtens \B(\sK)$ be a unitary representation of $\HH$ on a Hilbert space $\sK$. Fix an n.s.f.~weight $\theta$ on $\Linf(\GG/\HH)$ and denote by $\sH_\theta$ the corresponding GNS Hilbert space. The GNS representation will be denoted by $\pi_\theta:\Linf(\GG/\HH)\to\B(\sH_\theta)$. We often identify $\Linf(\GG/\HH)$ with its image under the GNS map and omit $\pi_\theta$ when there is no danger of confusion.

Let $\sH$ be a fixed Hilbert space. Recall from Subsection \ref{Closed/open} the canonical right action of $\HH$ on $\GG$ denoted by $\alpha$. Following the notation introduced in \cite[Definition 4.1]{KustermansInduced}, we let
\[
\cP_\sH=\bigl\{X\in\B(\sH)\vtens\Linf(\GG)\vtens\B(\sK)\st(\id\tens\alpha\tens\id)X=U_{34}^*X_{124}\bigr\}.
\]
When $\sH=\CC$ then we shall simply write $\cP$ for $\cP_\CC$. Observe that if $X\in\cP$ and $y\in\Linf(\GG/\HH)$ then $(y\tens\I)X\in\cP$.

On the algebraic tensor product $\cP\atens(\sH_\theta\tens\sK)$ define the (pre-)inner product
\[
\scp{X\tens{w}}{Y\tens{v}}=\scp{w}{(\id\tens\pi_\theta\tens\id)(X^*Y)v},\qqquad{X,Y}\in\cP,\:{w,v}\in\sH_\theta\tens\sK.
\]
and let $\IndK(\sK)$ be the Hilbert space obtained via the separation/completion procedure from this (pre-)inner product.

As the notation suggests, $\IndK(\sK)$ will be the Hilbert space on which the induced representation of $U$ will act. Proposition 4.6 of \cite{KustermansInduced} shows that to every element $X\in\cP_\sH$ one can associate in a canonical way an operator $X_*\in\B(\sH\tens\sH_\theta\tens\sK, \sH\tens\IndK(\sK))$. We will later use the following properties stated in \cite[Results 4.9, Notation 4.7]{KustermansInduced}:
\begin{subequations}\label{props0}
\begin{align}
(X_*)^*Y_*&=(\id\tens\pi_\theta\tens\id)(X^*Y),&&X,Y\in\cP_\sH,\label{props1}\\
(XZ)_*&= X_*(\id\tens\pi_\theta\tens\id)(Z),&&X\in\cP_\sH,\:Z\in\B(\sH)\vtens\Linf(\GG/\HH)\vtens\B(\sK).\label{props2}
\end{align}
\end{subequations}
In the special case when $\sH=\CC$ the definition of $X_*$ simplifies: for $X\in\cP$ and $w\in\sH_\theta\tens\sK$ the element $X_*w$ is the class of $X\tens{w}$ in $\IndK(\sK)$ (denoted $X\dtens{w}$ in \cite{KustermansInduced}).

The representation $\IndK(U)$ \emph{induced from $U$ in the sense of Kustermans} is defined by the linear extension of the formula
\[
\IndK(U)^*(\eta\tens{X_*w})=\bigl((\Delta_\GG\tens\id)(X)\bigr)_*\Upsilon_{12}^*(\eta\tens{w})
\]
for $X\in\cP$, $\eta\in\Ltwo(\GG)$, and $w\in\sH_\theta\tens\sK$, where $\Upsilon\in\Linf(\GG)\vtens\B(\sH_\theta)$ is the canonical unitary implementation of the action $\rho_{\GG/\HH}$ of $\GG$ on $\Linf(\GG/\HH)$ (note that $(\Delta_\GG\tens\id)(X)\in\cP_{\Ltwo(\GG)}$).

Recall that, as stated in the introduction, results of \cite{DeCommer} guarantee together with these of \cite[Section 7]{KustermansInduced} that $\IndK(U)$ is indeed a unitary representation of $\GG$.

\subsection{Induction in the sense of Vaes}\label{vaesrep}

In this subsection we recall the notion of induction due to Vaes, starting from a unitary representation of a locally compact quantum group $\HH$ on a Hilbert module $\cE$. As before we assume that $\HH$ is a closed quantum subgroup of $\GG$. Here the basic idea comes from identifying representations of $\GG$ with certain $\Linf(\hh{\GG})-\Linf(\hh{\GG})$ correspondences equipped in addition with a \emph{bicovariant} action of the algebra $\Linf(\GG)'$ (see \cite[Proposition 3.7]{Vaes-induction}).

We begin however with defining an auxiliary object, the so-called \emph{imprimitivity bimodule}. Let $\HH$ be a closed quantum subgroup of $\GG$ and let $\hh{\pi}:\Linf(\hh{\HH})\to\Linf(\hh{\GG})$ be the corresponding inclusion. We will need its ``commutant'' version $\hh{\pi}':\Linf(\hh{\HH})'\to\Linf(\hh{\GG})'$:
\[
\hh{\pi}'(x)=\hh{J}_\GG\hh{\pi}\bigl(\hh{J}_\HH{x}\hh{J}_\HH\bigr)\hh{J}_\GG,\qqquad{x}\in\Linf(\HH)',
\]
where $\hh{J}_\GG$ and $\hh{J}_\HH$ denote the modular conjugations associated to pairs $\bigl(\Linf(\hh{\GG}),\ph_{\hh{\GG}}\bigr)$ and $\bigl(\Linf(\hh{\HH}),\ph_{\hh{\HH}}\bigr)$ respectively.

%\begin{definition}[{\cite[Definition 4.2]{Vaes-induction}}]\label{def.wstimprim}
The \emph{imprimitivity bimodule} is the space
\[
\cI={\bigl\{v\in\B(\Ltwo(\HH),\Ltwo(\GG))\st}{vx}=\hh{\pi}'(x)v\text{ for all }x\in\Linf(\hh{\HH})'\bigr\}.
\]
%\end{definition}

With the natural scalar product $\scp{v}{w}=v^*w$ and left and right actions $\cI$ becomes a von Neumann $\GG\ltimes\Linf(\GG/\HH)-\Linf(\hh{\HH})$ correspondence.

Furthermore the map $\alpha_\cI:\cI\to\cI\tens\Linf(\hh{\GG})$ given by
\[
\alpha_\cI(v)=\vv^{\hh{\GG}}(v\tens\I)\bigl((\id\tens\hh{\pi})\bigl(\vv^{\hh{\HH}}\bigr)\bigr)^*,\qqquad{v}\in\cI,
\]
together with $\Delta_{\hh{\HH}}$ defines a compatible action of $\hh{\GG}$ on $\cI$.

Moreover, when we equip the crossed product $\GG\ltimes\Linf(\GG/\HH)$ with the dual action of $\hh{\GG}$, the left module action of $\GG\ltimes\Linf(\GG/\HH)$ on $\cI$ becomes covariant in a natural sense.

Now let $X\in\cL(\C_0(\HH)\tens\cE)$ be a unitary representation of $\HH$ on a Hilbert module $\cE$ over a \cst-algebra $B$. Lemma 4.5 of \cite{Vaes-induction} yields the existence of a unique strict $*$-homomorphism $\pil:\Linf(\hh{\HH})\to\cL(\Ltwo(\GG)\tens\cE)$ satisfying
\begin{equation}\label{Defpli}
(\id\tens\pil)\ww^\HH=(\id\tens\hh{\pi})(\ww^\HH)_{12}X_{13}.
\end{equation}

If we now define $\pir:\Linf(\hh{\GG})\to\cL(\Ltwo(\GG)\tens\cE)$ by the formula
\[
\pir(x)=\hh{J}_{\GG}x^*\hh{J}_{\GG}\tens\I,\qqquad{x}\in\Linf(\hh{\GG})
\]
and a map $\gamma:\Linf(\GG)'\to\cL(\Ltwo(\GG)\tens\cE)$ by
\[
\gamma(y)=y\tens\I,\qqquad{y}\in\Linf(\GG)'
\]
we obtain a so-called a \emph{bicovariant $B$-correspondence}
\begin{equation}\label{eq.ourcorresp}
\bicorresp{\Ltwo(\GG)\tens\cE}{\Linf(\hh{\HH})}{\Linf(\hh{\GG})}{\Linf(\GG)'}\:.
\end{equation}
Then the results of the Appendix of \cite{Vaes-induction} imply that one can define the Hilbert $B$-module ${\widetilde{\cF}}=\cI\btens\limits_{\pil}(\Ltwo(\GG)\tens\cE)$ and left and right module actions on ${\widetilde{\cF}}$ such that we get a $B$-correspondence
$\corresp{\widetilde{\cF}}{\GG\ltimes\Linf(\GG/\HH)}{\Linf(\hh{\GG})}$ which is equipped with the \emph{product action} $\alpha_{\widetilde{\cF}}$ of $\hh{\GG}$, constructed out of $\alpha_\cI$ and $\alpha_{\Ltwo(\GG)\tens\cE}$, where $\alpha_{\Ltwo(\GG)\tens\cE}$ is the action of $\hh{\GG}$ on $\Ltwo(\GG)\tens\cE$ defined by
\[
\alpha_{\Ltwo(\GG)\tens\cE}(\xi)=\vv^{\hh{\GG}}_{13}(\xi\tens\I),\qqquad\xi\in{\Ltwo(\GG)\tens\cE}.
\]
(cf.~\cite[Paragraph following Lemma 4.5]{Vaes-induction}). The action $\alpha_{\widetilde{\cF}}$ is given by the formula
\[
\alpha_{\widetilde{\cF}}(v\btens_{\pil}\zeta)=\alpha_\cI(v)\btens_{\pil\tens\id}\alpha_{\Ltwo(\GG)\tens\cE}(\zeta),
\qqquad{v}\in\cI,\:\zeta\in\Ltwo(\GG)\tens\cE.
\]
(cf.~\cite[Proposition 12.13]{Vaes-induction}).

The action $\alpha_{\widetilde{\cF}}$ yields a representation $\pi':\Linf(\GG)'\to\cL(\widetilde{\cF})$ and $\hh{U}\in\Rep(\hh{\GG},\widetilde{\cF})$ such that
\begin{equation}\label{hatU}
\hh{U}=(\pi'\tens\id)(\vv^{\hh{\GG}})
\end{equation}
and
\begin{equation}\label{hatU-alphaF}
\hh{U}(\Omega\tens a)=\alpha_{\widetilde{\cF}}(\Omega)(1 \otimes a)
\end{equation}
for all $\Omega\in\widetilde{\cF}$ and $a\in\C_0(\hh{\GG})$ (see \cite[Definition A.2 and the following paragraph]{Vaes-induction}).

Covariance of $Y$ with respect to $\pil$ and $\pir$ in the sense of \cite[Remark 3.6]{Vaes-induction} yields the \emph{bicovariant} $B$-correspondence
\[
\bicorresp{{\widetilde{\cF}}}{\Linf(\hh{\GG})}{\Linf(\hh{\GG})}{\Linf(\GG)'}\:.
\]
Now \cite[Proposition 3.7]{Vaes-induction} shows the existence of a canonically determined Hilbert $B$-module $\Ind(\cE)$ together with a unitary representation $\Ind(X)\in\Rep(\GG,\Ind(\cE))$ such that
\begin{equation}\label{box-iden}
\bicorresp{{\widetilde{\cF}}}{\Linf(\hh{\GG})}{\Linf(\hh{\GG})}{\Linf(\GG)'}\;\cong\;
\bicorresp{\Ltwo(\GG)\tens\Ind(\cE)}{\Linf(\hh{\GG})}{\Linf(\hh{\GG})}{\Linf(\GG)'}
\end{equation}
as bicovariant correspondences. In what follows we shall use the symbol $\IndV(\cdot)$ to denote the objects defined above. Note that the fact that $\widetilde{\cF}$ is a left $\GG\ltimes\Linf(\GG/\HH)$-module implies in particular existence of a map
\begin{equation}\label{covrepVaes}
\rho:\Linf(\GG/\HH)\longrightarrow\cL(\IndV(\cE))
\end{equation}
satisfying the covariance relation: $\Ind(X)^*\bigl(\I\tens\rho(x)\bigr)\Ind(X)=(\id\tens\rho)\Delta_\GG(x)$ for all $x\in\Linf(\GG/\HH)$.

%\begin{definition}
The Hilbert $B$-module $\IndV(\cE)$ is called the Hilbert $B$-module \emph{induced in the sense of Vaes} from $\cE$ and the unitary representation $\IndV(X)$ is called the representation \emph{induced} by $X$ (also in the sense of Vaes).
%\end{definition}

\section{Representation-theoretic characterization of open quantum subgroups}\label{sect:RepTh}

In this section we prove that for an open quantum subgroup $\HH$ of a locally compact quantum group $\GG$ we have a canonical \cst-inclusion $\C_0^\uu(\hh{\HH}) \subset\C_0^\uu(\hh{\GG})$. We remark that this was previously proved by the  first three authors under the coamenability assumption on $\GG$ (see \cite[Theorem 5.2]{KKS}). The proof given here is completely different from the one in \cite{KKS}. In fact, in the latter we showed through direct computations that every positive definite function in $\Linf(\HH)$ is also positive definite when regarded as an element in $\Linf(\GG)$. As in the general, non-coamenable, case the criterion for positive-definiteness used in \cite{KKS} does not hold (see \cite{DawsSalmi}), that proof cannot be extended to arbitrary locally compact quantum groups. In the present approach we utilize Kustermans's theory of induced representations.

In view of the 1-1 correspondence between unitary representations of a locally compact quantum group $\GG$ and representations of $\C_0^\uu(\hh{\GG})$, the result mentioned above, together with the existence of a suitable conditional expectation, allows us to apply classical construction of inducing representations from \cst-subalgebras due to Rieffel \cite{rief1} to the case of unitary representations of open quantum subgroups.

In what follows $\GG$ is a locally compact quantum group and $\HH$ is an open quantum subgroup of $\GG$. Recall that we denote by $\alpha:\Linf(\GG)\to\Linf(\GG)\vtens\Linf(\HH)$ the canonical (right) action of $\HH$ on $\GG$ associated to the inclusion $\HH\hookrightarrow\GG$. Recall also that we write $\rho_{\GG/\HH}$ for the restriction of $\Delta_\GG$ to $\Linf(\GG/\HH)$, giving the canonical left action of $\GG$ on $\GG/\HH$.

Let $\theta$ be the disintegration of $\ph_\GG$, that is the unique n.s.f.~weight on $\Linf(\GG/\HH)$ satisfying
\begin{equation}\label{disintegr}
\theta\bigl((\id\tens\ph_\HH)(\alpha(x))\bigr)=\ph_\GG(x),\qqquad{x}\in\Linf(\GG)^+
\end{equation}
(see \cite[Proposition 8.7]{KustermansInduced}).

\begin{lemma}\label{theta-inv}
The weight $\theta$ is invariant, that is
\[
(\id\tens\theta)(\rho_{\GG/\HH}(x))=\theta(x)\I
\]
for all $x\in\Linf(\GG/\HH)^+$. Moreover $\theta(\I_\HH)$ is finite and non-zero.
\end{lemma}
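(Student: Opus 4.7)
My plan is to treat invariance and finiteness/non-vanishing of $\theta(\I_\HH)$ separately, in both cases leveraging the disintegration identity \eqref{disintegr} together with left invariance of $\ph_\GG$ and $\ph_\HH$. For invariance, I would introduce $T := (\id\tens\ph_\HH)\comp\alpha = (\id\tens\widetilde{\ph}_\HH)\comp\Delta_\GG$, where $\widetilde{\ph}_\HH := \ph_\HH\comp\pi$ is the normal semifinite weight on $\Linf(\GG)$ obtained by lifting $\ph_\HH$ through the surjective morphism $\pi$; in this notation the disintegration simply reads $\theta\comp T = \ph_\GG$. The crucial observation is the coproduct-intertwining identity
\[
\Delta_\GG\comp T = (\id\tens\id\tens\widetilde{\ph}_\HH)\comp(\id\tens\Delta_\GG)\comp\Delta_\GG = (\id\tens T)\comp\Delta_\GG,
\]
a routine slicing manipulation based on coassociativity of $\Delta_\GG$ and normality of $\widetilde{\ph}_\HH$. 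Applying $(\id\tens\theta)$ to this and invoking $\theta\comp T = \ph_\GG$ together with left invariance of $\ph_\GG$ then yields, for every $y\in\gN_{\ph_\GG}^+$,
\[
(\id\tens\theta)(\Delta_\GG(T(y))) = (\id\tens\ph_\GG)(\Delta_\GG(y)) = \ph_\GG(y)\,\I = \theta(T(y))\,\I.
\]
Since $T(\gN_{\ph_\GG}^+)$ forms a $\sigma$-weakly dense subcone of the $\theta$-finite part of $\Linf(\GG/\HH)^+$ (by the very construction of $\theta$ via disintegration), normality and lower semicontinuity of $\theta$ then extend this invariance to all of $\Linf(\GG/\HH)^+$.

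For the second claim I would specialize $T$ to $\Linf(\HH)$. Identifying $\Linf(\HH)$ with $\I_\HH\Linf(\GG)$ and $\pi$ with the compression by $\I_\HH$, the group-like properties of $\I_\HH$ combined with $\pi$ being a Hopf $*$-homomorphism force $\alpha|_{\Linf(\HH)} = \Delta_\HH$. Left invariance of $\ph_\HH$ then gives $T(x) = \ph_\HH(x)\,\I_\HH$ for every $x\in\Linf(\HH)^+$, so the disintegration collapses to the clean proportionality
\[
\ph_\GG(x) = \theta(\I_\HH)\,\ph_\HH(x),\qqquad x\in\Linf(\HH)^+.
\]
By semifiniteness of $\ph_\GG$, the set $\I_\HH\gN_{\ph_\GG}$ is $\sigma$-weakly dense in the nonzero algebra $\Linf(\HH)$, so one can pick $z\in\gN_{\ph_\GG}$ with $x := \I_\HH z\neq 0$ and $\ph_\GG(x^*x)<\infty$. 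Faithfulness of $\theta$ applied to the nonzero projection $\I_\HH$ yields $\theta(\I_\HH)>0$; if $\theta(\I_\HH)$ were infinite, the displayed proportionality would force $\ph_\GG(x^*x) = \infty$ (using $\ph_\HH(x^*x)>0$ by faithfulness of $\ph_\HH$), contradicting $\ph_\GG(x^*x)<\infty$. Hence $\theta(\I_\HH)\in(0,\infty)$.

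The only genuine technical subtlety is justifying the slicing identity $\Delta_\GG\comp T = (\id\tens T)\comp\Delta_\GG$, which involves the unbounded weight $\widetilde{\ph}_\HH$; this is standard and can be made rigorous by first restricting to elements of $\gN_{\widetilde{\ph}_\HH}\cap\gN_{\ph_\GG}^+$ before extending by normality. The auxiliary identification $\alpha|_{\Linf(\HH)} = \Delta_\HH$ is a straightforward consequence of the open-subgroup framework developed in \cite{KKS}.
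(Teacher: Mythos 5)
Your treatment of invariance has the same computational core as the paper's proof: with $T=(\id\tens\ph_\HH)\comp\alpha$, the identity $\Delta_\GG\comp T=(\id\tens T)\comp\Delta_\GG$ (equivalently, the right quantum group homomorphism property $(\Delta_\GG\tens\id)\comp\alpha=(\id\tens\alpha)\comp\Delta_\GG$) together with $\theta\comp T=\ph_\GG$ and left invariance of $\ph_\GG$ gives $(\id\tens\theta)\bigl(\rho_{\GG/\HH}(T(y))\bigr)=\theta\bigl(T(y)\bigr)\I$. The gap is the final extension step. Even granting that $T$ of the $\ph_\GG$-finite positive elements is $\sigma$-weakly dense in the $\theta$-finite part of $\Linf(\GG/\HH)^+$, agreement of the normal weights $\theta$ and $\theta_\omega:=\theta\comp(\omega\tens\id)\comp\rho_{\GG/\HH}$ on a $\sigma$-weakly dense subcone does not imply $\theta_\omega=\theta$: lower semicontinuity of a normal weight controls increasing nets and norm limits, not $\sigma$-weak limits. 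Two distinct normal semifinite faithful weights can agree on a $\sigma$-weakly dense subcone of the positive cone; for instance on $\ell^\infty$ the weights with densities $(1)_n$ and $\bigl(1+\tfrac{1}{2}(-1)^n\bigr)_n$ agree on the cone of positive finitely supported sequences whose even- and odd-indexed sums coincide, and that cone is $\sigma$-weakly dense in the positive cone. So ``normality and lower semicontinuity extend the identity'' is not a proof. The repair is exactly the paper's move: your computation needs no restriction to $\ph_\GG$-finite elements (read it in the extended positive part), so it shows that for every state $\omega\in\Lone(\GG)$ the normal weight $\theta_\omega$ satisfies $\theta_\omega\comp T=\ph_\GG$ on all of $\Linf(\GG)^+$, and the uniqueness clause of the disintegration (\cite[Proposition 8.7]{KustermansInduced}) then forces $\theta_\omega=\theta$.

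For the second assertion your route differs genuinely from the paper's and is essentially correct. The identification $\alpha|_{\Linf(\HH)}=\Delta_\HH$ does hold (it uses centrality of $\I_\HH$ and the two-sided group-like identity, available from \cite{KKS}), and it yields $\ph_\GG(a)=\ph_\HH(a)\,\theta(\I_\HH)$ for all $a\in\Linf(\HH)^+$; the paper instead gets finiteness from minimality of $\I_\HH$ in $\Linf(\GG/\HH)$ and non-vanishing from invariance plus the weak$^*$-dense ideal generated by slices of $\Delta_\GG(\I_\HH)$. One caveat: you invoke faithfulness of $\theta$, which the paper's argument never uses (and with the paper's convention ``n.s.f.''\ only means normal semifinite, so faithfulness is not explicitly on the table). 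You do not need it: choosing $z\in\gN_{\ph_\GG}$ with $a:=\I_\HH z^*z=z^*\I_\HH z\neq0$, one has $a\le z^*z$, hence $\ph_\GG(a)\in(0,\infty)$ by faithfulness of $\ph_\GG$, while $\ph_\HH(a)>0$ by faithfulness of $\ph_\HH$; the proportionality then excludes both $\theta(\I_\HH)=\infty$ and $\theta(\I_\HH)=0$ at once. With that adjustment your second part is a clean alternative to the paper's argument.
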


\begin{proof}
Let $\omega\in\Lone(\GG)$ be a state. Define a (normal semifinite) weight $\theta_\omega$ on $\Linf(\GG/\HH)$ by
\[
\theta_\omega(y)=\theta\bigl((\omega\tens\id)(\rho_{\GG/\HH}(y))\bigr),\qqquad{y}\in\Linf(\GG/\HH)^+.
\]
For $x\in\Linf(\GG)^+$ we have
{\allowdisplaybreaks
\begin{align*}
\theta_\omega\bigl((\id\tens\ph_\HH)\alpha(x)\bigr)&=\theta\Bigl((\omega\tens\id)(\rho_{\GG/\HH}\bigl((\id\tens\ph_\HH)(\alpha(x))\bigr))\Bigr)\\
&=\theta\Bigl((\omega\tens\id)(\Delta_\GG\bigl((\id\tens\ph_\HH)\alpha(x)\bigr))\Bigr)\\
&=\theta\bigl((\omega\tens\id)((\id\tens\id\tens\ph_\HH)(\Delta_\GG\tens\id)(\alpha(x)))\bigr)\\
&=\theta\bigl((\omega\tens\id)((\id\tens\id\tens\ph_\HH)(\id\tens\alpha)(\Delta_\GG(x)))\bigr)\\
&=\theta\bigl((\id\tens\ph_\HH)((\omega\tens\id\tens\id)(\id\tens\alpha)(\Delta_\GG(x)))\bigr)\\
&=\theta\Bigl((\id\tens\ph_\HH)(\alpha\bigl((\omega\tens\id)(\Delta_\GG(x)))\bigl)\Bigr)\\
&=\ph_\GG\bigl((\omega\tens\id)(\Delta_\GG(x))\bigr) = \ph_\GG(x).
\end{align*}
}By uniqueness of $\theta$ we get $\theta_\omega=\theta$, which means that $\theta$ is invariant under the action $\rho_{\GG/\HH}$.

For the second assertion, note first that since the support projection $\I_\HH$ of $\HH$ is minimal in $\Linf(\GG/\HH)$ by \cite[Proposition 3.2]{KKS}, it follows that $\theta(\I_\HH)<+\infty$ (otherwise $\theta$ would not be semifinite). Suppose now that $\theta(\I_\HH) = 0$. Then the invariance of $\theta$ yields $\theta\bigl((\omega\tens\id)(\Delta_\GG(\I_\HH))\bigr)=0$ for all $\omega\in\Lone(\GG)$. Since
$\bigl\{(\omega\tens\id)(\Delta_\GG(\I_\HH))\st\omega\in\Lone(\GG)\bigr\}$ is a weak${}^*$-dense ideal in $\Linf(\GG/\HH)$ by \cite[Paragraph following Eq.~(3.6) in the proof of Theorem 3.3]{KKS} the last condition implies that $\theta=0$, which contradicts \eqref{disintegr}.
\end{proof}

By normalizing the Haar weights, if necessary, we may (and will) assume $\theta(\I_\HH)=1$. We will write $\Ltwo(\GG/\HH)$ for the GNS Hilbert space of $\theta$ (it was denoted $\sH_\theta$ in Section \ref{kusrep}). The weight $\theta$ determines a unitary implementation $\Upsilon$ of the action $\rho_{\GG/\HH}$ of $\GG$ on $\GG/\HH$. The precise form of the unitary $\Upsilon$ can be deduced as follows.

Define an operator $\Upsilon^*:\Ltwo(\GG)\tens\Ltwo(\GG/\HH)\to\Ltwo(\GG)\tens\Ltwo(\GG/\HH)$ by
\begin{equation}\label{defups}
\Upsilon^*\bigl(\Lambda_{\ph_\GG}(x)\tens\Lambda_\theta(y)\bigr)=(\Lambda_{\ph_\GG}\tens\Lambda_\theta)\bigl(\rho_{\GG/\HH}(y)(x\tens\I_{\Linf(\GG/\HH)})\bigr),\qqquad{x}\in\gN_{\ph_\GG},\:y\in\gN_\theta.
\end{equation}
Then $\Upsilon^*$ is easily seen to be an isometry satisfying the following two conditions:
\begin{itemize}
\item $\Upsilon^*\in\Linf(\GG)\vtens\B(\Ltwo(\GG/\HH))$,
\item $(\Delta_\GG\tens\id)(\Upsilon^*)=\Upsilon^*_{23}\Upsilon^*_{13}$.
\end{itemize}
Hence, by \cite[Corollary 4.11]{Brannanetal} $\Upsilon^*$ is unitary and satisfies the equality
\[
\Delta_\GG(x)=\Upsilon^*(\I\tens{x})\Upsilon,\qqquad{x}\in\Linf(\GG/\HH).
\]
Since $\theta$ is invariant (Lemma \ref{theta-inv}), it follows by the same methods as those used in \cite[Proposition 4.3]{Vaes-implementation} that $\Upsilon$ is the canonical implementation of the action $\rho_{\GG/\HH}$ of $\GG$ on $\Linf(\GG/\HH)$.

Before proving the main result of this section, Theorem \ref{coamenability_dropped}, we recall the following fact from \cite{KKS}. Suppose $\HH\subset\GG$ is identified as an open quantum subgroup via the Hopf $*$-homomorphism $\pi^\uu\in\Mor(\C_0^\uu(\GG),\C_0^\uu(\HH))$ as in Definition \ref{Definition:open} and let $\hh{\pi}^\uu\in\Mor(\C_0^\uu(\hh{\HH}),\C_0^\uu(\hh{\GG}))$ be the dual of $\pi^\uu$. Then $\hh{\pi}^\uu\bigl(\C_0^\uu(\hh{\HH})\bigr)\subset\C_0^\uu(\hh{\GG})$ by \cite[Lemma 2.5]{KKS}.

\begin{theorem}\label{coamenability_dropped}
Let $\GG$ be a locally compact quantum group and let $\HH\subset\GG$ be an open quantum subgroup identified via
the Hopf $*$-homomorphism $\pi^\uu\in\Mor(\C_0^\uu(\GG),\C_0^\uu(\HH))$. Then the $*$-homomorphism $\hh{\pi}^\uu:\C_0^\uu(\hh{\HH})\to\C_0^\uu(\hh{\GG})$ is injective.
\end{theorem}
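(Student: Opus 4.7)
The plan is to prove a Frobenius-reciprocity-type statement via Kustermans's induction: for every unitary representation $U$ of $\HH$, construct an isometric intertwiner $\Phi$ realising $U$ as a subrepresentation of the restriction of $\IndK(U)$ to $\HH$ (the latter being the representation of $\HH$ corresponding to $\phi_{\IndK(U)}\comp\hh{\pi}^\uu$). Applied to a $U$ with $\phi_U\colon\C_0^\uu(\hh{\HH})\to\B(\sK)$ faithful (for instance, the universal representation of $\C_0^\uu(\hh{\HH})$), this will force $\hh{\pi}^\uu$ to be injective: any $a\in\ker\hh{\pi}^\uu$ satisfies $\phi_{\IndK(U)}(\hh{\pi}^\uu(a))=0$, so the embedded subrepresentation kills $a$ and faithfulness of $\phi_U$ yields $a=0$.

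For the construction of $\Phi$, view $U\in\Linf(\HH)\vtens\B(\sK)$ as an element $\tilde U\in\Linf(\GG)\vtens\B(\sK)$ via the inclusion $\Linf(\HH)=\I_\HH\Linf(\GG)\subset\Linf(\GG)$ (with $\I_\HH$ central in $\Linf(\GG)$). From $\alpha=(\id\tens\pi)\comp\Delta_\GG$, the invariance $\alpha(\I_\HH)=\I_\HH\tens\I$, and the identity $(\pi\tens\pi)\Delta_\GG=\Delta_\HH\pi$, one deduces $\alpha(u)=\alpha(\I_\HH)\alpha(u)\in\Linf(\HH)\vtens\Linf(\HH)$ for $u\in\Linf(\HH)$, and then $\alpha|_{\Linf(\HH)}=\Delta_\HH$. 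Hence
\[
(\alpha\tens\id)(\tilde U)=(\Delta_\HH\tens\id)(U)=U_{13}U_{23}=\tilde U_{13}U_{23},
\]
which is exactly the condition $\tilde U^*\in\cP$. Since $\tilde U\,\tilde U^*=\I_\HH\tens\I\in\Linf(\GG/\HH)\vtens\B(\sK)$ and (after normalization) $\theta(\I_\HH)=1$, the formula
\[
\Phi(v)=\tilde U^*\dtens(\Lambda_\theta(\I_\HH)\tens v),\qquad v\in\sK,
\]
defines an isometry $\Phi\colon\sK\to\IndK(\sK)$, as a direct inner-product computation using \eqref{props1} and $\pi_\theta(\I_\HH)\Lambda_\theta(\I_\HH)=\Lambda_\theta(\I_\HH)$ shows.

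The remaining step is to verify the intertwining identity
\[
(\pi\tens\id)(\IndK(U))\cdot(\I\tens\Phi)=(\I\tens\Phi)\cdot U
\]
in $\Linf(\HH)\vtens\B(\sK,\IndK(\sK))$, which exhibits $U$ as a sub-representation of the restriction of $\IndK(U)$ to $\HH$. One expands $\IndK(U)^*(\eta\tens\Phi(v))$ through the defining formula from Subsection \ref{kusrep} with $X=\tilde U^*$, the explicit description \eqref{defups} of $\Upsilon^*$, and the properties \eqref{props1}--\eqref{props2} of the map $X\mapsto X_*$. The key collapse is that, upon multiplying the first leg by $\I_\HH$ (i.e.~applying $\pi$ there, which is the content of restricting to $\HH$), the identity $(\I_\HH\tens\I_\HH)\Delta_\GG(u)=\Delta_\HH(u)$ for $u\in\Linf(\HH)$ turns $(\Delta_\GG\tens\id)(\tilde U^*)$ into $(\Delta_\HH\tens\id)(U^*)=U_{23}^*U_{13}^*$, and the group-like relation $\Delta_\GG(\I_\HH)(\I_\HH\tens\I)=\I_\HH\tens\I_\HH$ ensures that $\Upsilon^*$ restores $\Lambda_\theta(\I_\HH)$ in the $\sH_\theta$-slot, so that the full expression is $\eta\tens\Phi(U^*v)$.

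The main obstacle is precisely this intertwining verification: it requires careful book-keeping of the interplay between the coproducts $\Delta_\GG$ and $\Delta_\HH$ on $\Linf(\HH)\subset\Linf(\GG)$, the canonical implementation $\Upsilon$ of $\rho_{\GG/\HH}$ produced in the paragraph following Lemma \ref{theta-inv}, and Kustermans's operator $X\mapsto X_*$. Once this is in place, the theorem follows as explained in the first paragraph, with all required identities traceable to open-subgroup-specific features, namely the centrality of $\I_\HH$ in $\Linf(\GG)$, the normalization $\theta(\I_\HH)=1$, and the group-like property of $\I_\HH$.
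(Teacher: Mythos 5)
Your proposal is correct and takes essentially the same route as the paper's own proof: the paper likewise views $U^*$ (with $\Linf(\HH)\subset\Linf(\GG)$) as an element of $\cP$, forms the Kustermans-induced representation, uses vectors $(U^*)_*\bigl(\Lambda_\theta(\I_\HH)\tens\cdot\bigr)$, and carries out exactly the collapse computation you sketch, based on \eqref{props1}--\eqref{props2}, \eqref{defups}, the group-like property of $\I_\HH$ and the normalization $\theta(\I_\HH)=1$. The only difference is packaging: the paper concludes by showing every state of $\C_0^\uu(\hh{\HH})$ factors through $\hh{\pi}^\uu$ (surjectivity of the dual map, via the GNS representation of each state and a vector state of the induced representation), whereas you phrase the same matrix-coefficient identity as an isometric intertwiner for a single faithful representation.
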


\begin{proof}
We will conclude injectivity of $\hh{\pi}^\uu$ by proving surjectivity of the dual map
\[
\mbox{$\hh{\pi}^\uu$}^*:\C_0^\uu(\hh{\GG})^*\longrightarrow\C_0^\uu(\hh{\HH})^*.
\]
Take any state $\varrho\in\C_0^\uu(\hh{\HH})^*$. We want to show that there exists $\vartheta\in\C_0^\uu(\hh{\GG})^*$ such that
\begin{equation}\label{rhotheta}
\varrho=\vartheta\comp\hh{\pi}^\uu.
\end{equation}
Note that $\varrho$ can be written as $\varrho=\omega_{\Omega_\varrho}\comp\pi_\varrho$, where $(\sH_\varrho,\pi_\varrho,\Omega_{\varrho})$ is the GNS triple for $\varrho$. Thus for $\eta_1,\eta_2\in\Ltwo(\HH)$ we have
\[
\varrho\bigl((\omega_{\eta_1,\eta_2}\tens\id)(\wW^\HH)\bigr)
=\scp{\eta_1\tens\Omega_\varrho}{((\id\tens\pi_\varrho)(\wW^\HH))(\eta_2\tens\Omega_\varrho)}
=\scp{\eta_1\tens\Omega_\varrho}{U(\eta_2\tens\Omega_\varrho)},
\]
where $U=(\id\tens\pi_\varrho)(\wW^\HH)\in\Rep(\HH,\sH_\varrho)$. We will find a representation $\widetilde{U}$ of $\GG$ on a Hilbert space $\widetilde{\sH}$ and a vector $\xi\in\sH$ such that
\begin{equation}\label{etaeta}
\scp{\eta_1\tens\xi}{\widetilde{U}(\eta_2\tens\xi)}=\scp{\eta_1\tens\Omega_\varrho}{U(\eta_2\tens\Omega_\varrho)},\qqquad\eta_1,\eta_2\in\Ltwo(\HH)
\end{equation}
(note that the left leg of $\widetilde{U}$ can act on elements of $\Ltwo(\HH)$ because $\Ltwo(\HH)\subset\Ltwo(\GG)$). Since $\widetilde{U}$ is necessarily of the form $\widetilde{U}=(\id\tens\phi_{\widetilde{U}})(\wW^\GG)$ for a unique representation $\phi_{\widetilde{U}}$ of $\C_0^\uu(\hh{\GG})$ on $\widetilde{\sH}$, we can rewrite the left hand side of \eqref{etaeta} as
\[
(\omega_\xi\comp\phi_{\widetilde{U}})\bigl((\omega_{\eta_1,\eta_2}\tens\id)(\wW^\GG)\bigr)
=(\omega_\xi\comp\phi_{\widetilde{U}})\Bigl(\hh{\pi}^\uu\bigl((\omega_{\eta_1,\eta_2}\tens\id)(\wW^\HH)\bigr)\Bigr)
\]
where the last equality follows from \eqref{WW} and the way $\Ltwo(\HH)$ is identified with a subspace of $\Ltwo(\GG)$. Therefore with $\vartheta=\omega_\xi\comp\phi_{\widetilde{U}}$ we get equality \eqref{rhotheta} on a dense subset of $\C_0^\uu(\hh{\HH})$ (cf.~\cite[Eq.~(5.14)]{mu2})

%Thus for fixed $\varrho$ as above we have $\sH_\varrho$, $\Omega_\varrho$ and $U$.
We let now $\widetilde{\sH}$ be $\IndK(\sH_\varrho)$ and $\widetilde{U}$ be $\IndK(U)$. As noted before statement of the theorem, the canonical implementation $\Upsilon$ of $\rho_{\GG/\HH}$ used in the definition of $\IndK(U)$ can be explicitly described. We will use formula \eqref{defups} for the adjoint of the canonical implementation $\Upsilon$ of $\rho_{\GG/\HH}$ entering the definition of $\IndK(U)$ and also other notation discussed in Subsection \ref{kusrep}.

Considering $\Linf(\HH)$ as a subalgebra of $\Linf(\GG)$ we have $(\alpha_\HH\tens\id)(U^*) =U_{23}^*U_{13}^*$. In particular $U^*\in\cP$. Let $\xi=(U^*)_*\bigl(\Lambda_\theta(\I_\HH)\tens\Omega_\varrho\bigr)\in\IndK(\sH_\varrho)$. Then for all $\eta\in\Ltwo(\HH)\subset\Ltwo(\GG)$ and $x\in\gN_{\ph_\HH}$ we have
{\allowdisplaybreaks
\begin{align*}
&\scp{\eta\tens\xi}{\IndK(U)^*\bigl(\Lambda_{\ph_\HH}(x)\tens\xi\bigr)}\\
&\qqquad=\scp{\eta\tens(U^*)_*\bigl(\Lambda_\theta(\I_\HH)\tens\Omega_\varrho\bigr)}
{\IndK(U)^*\bigl(\Lambda_{\ph_\HH}(x)\tens(U^*)_*\bigl(\Lambda_\theta(\I_\HH)\tens\Omega_\varrho\bigr)\bigr)}\\
&\qqquad\stackrel{\scriptscriptstyle{1}}{=}\scp{\eta\tens(U^*)_*\bigl(\Lambda_\theta(\I_\HH)\tens\Omega_\varrho\bigr)}
{\bigl((\Delta_\GG\tens\id)(U^*)\bigr)_*\Upsilon_{12}^*\bigl(\Lambda_{\ph_\HH}(x)\tens\Lambda_\theta(\I_\HH)\tens\Omega_\varrho\bigr)}\\
&\qqquad=\scp{\eta\tens\Lambda_\theta(\I_\HH)\tens\Omega_\varrho}
{\bigl(\I\tens(U^*)_*\bigr)^*\bigl((\Delta_\GG\tens\id)(U^*)\bigr)_*
\Upsilon_{12}^*\bigl(\Lambda_{\ph_\HH}(x)\tens\Lambda_\theta(\I_\HH)\tens\Omega_\varrho\bigr)}\\
&\qqquad\stackrel{\scriptscriptstyle{2}}{=}\scp{\eta\tens\Lambda_\theta(\I_\HH)\tens\Omega_\varrho}
{U_{23}(\Delta_\GG\tens\id)(U^*)\Upsilon_{12}^*\bigl(\Lambda_{\ph_\HH}(x)\tens\Lambda_\theta(\I_\HH)\tens\Omega_\varrho\bigr)}\\
&\qqquad\stackrel{\scriptscriptstyle{3}}{=}\scp{\eta\tens\Lambda_\theta(\I_\HH)\tens\Omega_\varrho}
{U_{23}(\I\tens\I_\HH\tens\I)(\Delta_\GG\tens\id)(U^*)\Upsilon_{12}^*\bigl(\Lambda_{\ph_\HH}(x)\tens\Lambda_\theta(\I_\HH)\tens\Omega_\varrho\bigr)}\\
&\qqquad=\scp{\eta\tens\Lambda_\theta(\I_\HH)\tens\Omega_\varrho}
{U_{23}(\Delta_\HH\tens\id)(U^*)\Upsilon_{12}^*\bigl(\Lambda_{\ph_\HH}(x)\tens\Lambda_\theta(\I_\HH)\tens\Omega_\varrho\bigr)}\\
&\qqquad=\scp{\eta\tens\Lambda_\theta(\I_\HH)\tens\Omega_\varrho}
{U_{23}U_{23}^*U_{13}^*\Upsilon_{12}^*\bigl(\Lambda_{\ph_\HH}(x)\tens\Lambda_\theta(\I_\HH)\tens\Omega_\varrho\bigr)}\\
&\qqquad=\scp{\eta\tens\Lambda_\theta(\I_\HH)\tens\Omega_\varrho}
{U_{13}^*\Upsilon_{12}^*\bigl((\Lambda_{\ph_\HH}\tens\Lambda_\theta)(x\tens\I_\HH)\tens\Omega_\varrho\bigr)}\\
&\qqquad=\scp{\eta\tens\Lambda_\theta(\I_\HH)\tens\Omega_\varrho}
{U_{13}^*\Upsilon_{12}^*\bigl((\Lambda_{\ph_\GG}\tens\Lambda_\theta)(x\tens\I_\HH)\tens\Omega_\varrho)\bigr)}\\
&\qqquad\stackrel{\scriptscriptstyle{4}}{=}\scp{\eta\tens\Lambda_\theta(\I_\HH)\tens\Omega_\varrho}
{U_{13}^*\bigl((\Lambda_{\ph_\GG}\tens\Lambda_\theta)\bigl(\Delta_\GG(\I_\HH)(x\tens\I)\bigr)\tens\Omega_\varrho\bigr)}\\
&\qqquad\stackrel{\scriptscriptstyle{5}}{=}\scp{\eta\tens\Lambda_\theta(\I_\HH)\tens\Omega_\varrho}
{U_{13}^*(\I_\HH\tens\I\tens\I)\bigl((\Lambda_{\ph_\GG}\tens\Lambda_\theta)\bigl(\Delta_\GG(\I_\HH)(x\tens\I)\bigr)\tens\Omega_\varrho\bigr)}\\
&\qqquad=\scp{\eta\tens\Lambda_\theta(\I_\HH)\tens\Omega_\varrho}
{U_{13}^*\bigl((\Lambda_{\ph_\GG}\tens\Lambda_\theta)\bigl((\I_\HH\tens\I)\Delta_\GG(\I_\HH)(x\tens\I)\bigr)\tens\Omega_\varrho\bigr)}\\
&\qqquad=\scp{\eta\tens\Lambda_\theta(\I_\HH)\tens\Omega_\varrho}
{U_{13}^*\bigl((\Lambda_{\ph_\GG}\tens\Lambda_\theta)\bigl((\I_\HH\tens\I_\HH)(x\tens\I)\bigr)\tens\Omega_\varrho\bigr)}\\
&\qqquad=\theta(\I_\HH)\scp{\eta\tens\Omega_\varrho}{U^*\bigl((\Lambda_{\ph_\GG}(x)\tens\Omega_\varrho\bigr)}
=\scp{\eta\tens\Omega_\varrho}{U^*\bigl((\Lambda_{\ph_\GG}(x)\tens\Omega_\varrho\bigr)},
\end{align*}
}where in $\stackrel{\scriptscriptstyle{1}}{=}$ we used the definition of $\IndK(U)$, equalities $\stackrel{\scriptscriptstyle{3}}{=}$ and $\stackrel{\scriptscriptstyle{5}}{=}$ follow from the fact that $U\in\Linf(\HH)\vtens\B(\sK)$, $\stackrel{\scriptscriptstyle{2}}{=}$ is implied by \eqref{props1}, and in $\stackrel{\scriptscriptstyle{4}}{=}$ we used \eqref{defups}.

Now with $\eta_2=\eta$ and $\eta_1=\Lambda_{\ph_\HH}(x)$ we obtain \eqref{etaeta} for all $\eta_2\in\Ltwo(\HH)$ and $\eta_1$ in the range of $\Lambda_{\ph_\HH}$ which suffices  to have \eqref{etaeta} for all $\eta_1,\eta_2\in\Ltwo(\HH)$.
\end{proof}

In order to complete the setup of Rieffel's induction process, we also need a conditional expectation from $\C_0^\uu(\hh{\GG})$ onto $\C_0^\uu(\hh{\HH})$. In fact it is is already implicitly provided by the proof of Theorem \ref{coamenability_dropped}. Let us recall that for any von Neumann algebra $\sM$ the predual $\sM_*$ is a bimodule over $\sM$ in a natural way. In the following proposition we will use this structure for $\sM=\Linf(\GG)$.

\begin{proposition}\label{cond-exp}
The map
\begin{equation}\label{condE}
\lambda^\uu(\omega)\longmapsto\lambda^\uu(\I_\HH\cdot\omega),\qqquad\omega\in\Lone(\GG)
\end{equation}
extends to a conditional expectation $E:\C_0^\uu(\hh{\GG})\to\hh{\pi}^\uu\bigl(\C_0^\uu(\hh{\HH})\bigr)$.
\end{proposition}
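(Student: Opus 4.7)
The plan is to establish the key identity
\begin{equation*}
\lambda^\uu(\I_\HH\cdot\omega)\;=\;\hh{\pi}^\uu\bigl(\lambda_\HH^\uu(\omega|_{\Linf(\HH)})\bigr),\qquad\omega\in\Lone(\GG),\tag{$\star$}
\end{equation*}
and to derive from it the well-definedness, continuity, and conditional expectation properties of $E$. The identity $(\star)$ is obtained by slicing relation \eqref{WW}, $(\pi^\uu\tens\id)(\WW^\GG)=(\id\tens\hh{\pi}^\uu)(\WW^\HH)$, by the functional $\omega|_{\Linf(\HH)}\in\Lone(\HH)$, using that $\omega|_{\Linf(\HH)}\circ\pi=\I_\HH\cdot\omega$ as functionals on $\Linf(\GG)$ (since $\pi(x)=\I_\HH x$ under the identification $\Linf(\HH)=\I_\HH\Linf(\GG)$).

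From $(\star)$, the map sends $\lambda^\uu(\Lone(\GG))$ into $\hh{\pi}^\uu(\C_0^\uu(\hh{\HH}))$ and acts as the identity on this subalgebra: for $\omega\in\Lone(\HH)\subset\Lone(\GG)$ one has $\I_\HH\cdot\omega=\omega$ and $\omega|_{\Linf(\HH)}=\omega$, and density of $\lambda_\HH^\uu(\Lone(\HH))$ in $\C_0^\uu(\hh{\HH})$ propagates this. The technical core is the contractivity estimate $\|\lambda^\uu(\I_\HH\cdot\omega)\|\leq\|\lambda^\uu(\omega)\|$, which secures well-definedness on the dense subspace $\lambda^\uu(\Lone(\GG))$ and yields the continuous extension to $\C_0^\uu(\hh{\GG})$.

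To establish the contractivity, I would show that $\I_\HH$ itself is a positive definite function on $\GG$, realized as $\I_\HH=u_\GG(\vartheta_\HH)$ for a state $\vartheta_\HH$ on $\C_0^\uu(\hh{\GG})$, where $u_\GG(\vartheta):=(\vartheta\tens\id)(\Ww^\GG)$. The construction runs the proof of Theorem \ref{coamenability_dropped} with $\varrho$ the counit of $\C_0^\uu(\hh{\HH})$, corresponding to the trivial representation $U=\I$ of $\HH$ (whose matrix coefficient at $\Omega_\varrho=1$ is the unit of $\Linf(\HH)$, i.e.\ $\I_\HH$ as an element of $\Linf(\GG)$); the induced representation acts on $\Ltwo(\GG/\HH)$, and the vector $\xi=(U^*)_*(\Lambda_\theta(\I_\HH)\tens\Omega_\varrho)$, identified with $\Lambda_\theta(\I_\HH)$, produces a vector state whose associated positive definite function on $\GG$ is precisely $\I_\HH$. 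Given this, for any state $\vartheta$ on $\C_0^\uu(\hh{\GG})$ the element $\I_\HH\cdot u_\GG(\vartheta)=u_\GG(\vartheta_\HH)u_\GG(\vartheta)=u_\GG((\vartheta_\HH\tens\vartheta)\circ\Delta^\uu_{\hh{\GG}})$ is again a positive definite function on $\GG$ associated to a state, and the duality
\begin{equation*}
|\vartheta(\lambda^\uu(\I_\HH\cdot\omega))|\;=\;|\omega(\I_\HH\cdot u_\GG(\vartheta))|\;\leq\;\|\lambda^\uu(\omega)\|
\end{equation*}
yields the claim upon taking the supremum over $\vartheta$.

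With continuity in hand, $E$ is a norm-one projection of $\C_0^\uu(\hh{\GG})$ onto the $\cst$-subalgebra $\hh{\pi}^\uu(\C_0^\uu(\hh{\HH}))$, and Tomiyama's theorem confirms that $E$ is a conditional expectation.
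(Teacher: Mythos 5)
Your strategy is essentially the paper's: the same key identity obtained by slicing \eqref{WW}, and the same functional $\vartheta_\HH$ (the extension of $\eps_{\hh{\HH}}$ through $\hh{\pi}^\uu$ supplied by Theorem \ref{coamenability_dropped}, i.e.\ the vector state of the Kustermans-induced trivial representation) whose associated positive definite function $(\id\tens\vartheta_\HH)(\wW^\GG)$ is $\I_\HH$. The genuine gap is in your contractivity step. For a non-self-adjoint element $x$ of a \cst-algebra, $\sup\bigl\{|\vartheta(x)|\st\vartheta\text{ a state}\bigr\}$ is the numerical radius, not the norm; it can be as small as $\tfrac{1}{2}\|x\|$ (a nilpotent $2\times2$ matrix already shows this). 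So ``taking the supremum over $\vartheta$'' in your displayed estimate only gives $\|\lambda^\uu(\I_\HH\cdot\omega)\|\leq2\|\lambda^\uu(\omega)\|$. That suffices for a bounded extension, but not for a norm-one projection, and without norm one the appeal to Tomiyama's theorem at the end is unavailable -- so the conditional expectation property is not actually proved. The repair is short and is exactly what the paper does with the same ingredients you already have: your duality computation, run for \emph{arbitrary} $\vartheta\in\C_0^\uu(\hh{\GG})^*$ rather than only states, shows $\vartheta\bigl(\lambda^\uu(\I_\HH\cdot\omega)\bigr)=(\vartheta\tens\vartheta_\HH)\bigl(\Delta^\uu_{\hh{\GG}}(\lambda^\uu(\omega))\bigr)$, i.e.\ $E$ is the restriction of $x\mapsto(\id\tens\vartheta_\HH)\Delta^\uu_{\hh{\GG}}(x)$, which is unital completely positive (as $\vartheta_\HH$ is a state), hence contractive; idempotence and the bimodule property then follow from centrality of the idempotent $\I_\HH$, or from Tomiyama once norm one is in hand.

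A smaller point: your assertion that running the proof of Theorem \ref{coamenability_dropped} with $\varrho=\eps_{\hh{\HH}}$ yields a state whose positive definite function is \emph{precisely} $\I_\HH$ needs one extra line. The computation there is performed only for $\eta_1,\eta_2\in\Ltwo(\HH)$, so it directly gives the corner identity $\I_\HH\,(\id\tens\vartheta_\HH)(\wW^\GG)\,\I_\HH=\I_\HH$; a norm argument kills the off-diagonal corners, but the $(\I-\I_\HH)$-corner is not controlled, so $(\id\tens\vartheta_\HH)(\wW^\GG)=\I_\HH$ does not follow from that alone. One must observe that the same chain of equalities goes through verbatim for all $x\in\gN_{\ph_\GG}$ and $\eta\in\Ltwo(\GG)$ (equivalently, compute the matrix coefficient of $\Upsilon$ at $\Lambda_\theta(\I_\HH)$ using \eqref{defups} and the group-like property of $\I_\HH$). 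The paper's own proof tacitly uses the same fact, so this is a shared terse step rather than a divergence, but in your write-up it carries the whole weight of the argument and should be made explicit.
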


\begin{proof}
The mapping \eqref{condE} is explicitly given by the formula
\begin{equation}\label{condE2}
(\omega\tens\id)(\wW^\GG)\longmapsto(\omega\cdot\I_\HH\tens\id)(\wW^\GG),\qqquad\omega\in\Lone(\GG)
\end{equation}
and since
\[
\begin{split}
(\I_\HH\tens\I)\wW^\GG&=(\pi\tens\id)(\wW^\GG)=(\pi\tens\id)((\Lambda_\GG\tens\id)(\WW^\GG))\\
&=(\Lambda_\HH\tens\id)((\pi^\uu\tens\id)(\WW^\GG))=(\Lambda_\HH\tens\id)((\id\tens\hh{\pi}^\uu)(\WW^\HH))
=(\id\tens\hh{\pi}^\uu)(\wW^\HH),
\end{split}
\]
we have
\[
(\omega\cdot\I_\HH\tens\id)(\wW^\GG)=\hh{\pi}^\uu\bigl((\omega\tens\id)(\wW^\HH)\bigr).
\]
Thus \eqref{condE} is defined on a dense subset of $\C_0^\uu(\hh{\GG})$ and maps onto a dense subset of $\C_0(\hh{\HH})$. To see that it is bounded we notice that by Theorem \ref{coamenability_dropped} there exists $\vartheta\in\C_0^\uu(\hh{\GG})^*$ such that $\eps_{\hh{\HH}}=\vartheta\comp\hh{\pi}^\uu$, where $\eps_{\hh{\HH}}$ is the counit of $\hh{\HH}$. Thus
\[
\begin{split}
(\omega\cdot\I_\HH\tens\id)(\wW^\GG)&=(\omega\tens\id)\bigl(\bigl([(\id\tens\vartheta)(\wW^\GG)]\tens\I\bigr)\wW^\GG\bigr)\\
&=(\omega\tens\id\tens\vartheta)(\wW^\GG_{13}\wW^\GG_{12})\\
&=(\id\tens\vartheta)\Delta_{\hh{\GG}}^\uu\bigl((\omega\tens\id)(\wW^\GG)\bigr)
\end{split}
\]
and consequently the mapping \eqref{condE2} is the restriction of the bounded map
\[
\C_0(\hh{\GG})\ni{x}\longmapsto(\id\tens\vartheta)\Delta_{\hh{\GG}}^\uu(x).
\]
It is a conditional expectation due to the fact that $\I_\HH$ is a central idempotent.
\end{proof}

In view of Theorem \ref{coamenability_dropped} we often identify $\C_0^\uu(\hh{\HH})$ with $\hh{\pi}^\uu\bigl(\C_0^\uu(\hh{\HH})\bigr)\subset \C_0^\uu(\hh{\GG})$.

Let us also comment that \cite[Theorem 5.8]{KKS} provides a converse of Theorem \ref{coamenability_dropped} for regular locally compact quantum groups.

\section{Imprimitivity type result for Rieffel induction} \label{Sec:imprim}

In this section we prove an imprimitivity result for Rieffel's induction in our context, which will hold a key role in the main theorems of the paper to be established in the following section. This should be compared to \cite[Theorem 3.8]{rief1}.

In what follows $\GG$ is a locally compact quantum group, $\HH$ is an open quantum subgroup of $\GG$, and $E:\C_0^\uu(\hh{\GG})\to\C_0^\uu(\hh{\HH})$ is the conditional expectation constructed in Section \ref{sect:RepTh}. For the motivation of the definition below see also the last part of Subsection \ref{vaesrep}.

\begin{definition}
Let $U\in\M(\C_0(\GG)\tens\cK(\cE))$ be a unitary representation of $\GG$ on a Hilbert $B$-module $\cE$. A \emph{$U$-covariant representation of $\Linf(\GG/\HH)$} on $\cE$ is a strict unital $*$-homomorphism $\rho:\Linf(\GG/\HH)\to\cL(\cE)$ that satisfies
\[
U^*\bigl(\I\tens\rho(x)\bigr)U=(\id\tens\rho)(\Delta_\GG(x))
\]
for all $x\in\Linf(\GG/\HH)$.
\end{definition}
% I stand y the comment that the symbol $\rho$ is not very fortunate $\Delta_\GG$ restricted to $\Linf(\GG/\HH)$ is called $\rho_{\GG/\HH}$ and should really be changed to $\rho$.

We begin by a preliminary lemma.

\begin{lemma}\label{1_H-cond.exp.-1_H}
Suppose $U\in\M(\C_0(\GG)\tens\cK(\cE))$ is a unitary representation of $\GG$ on a Hilbert $B$-module $\cE$ and let $\rho$ be a $U$-covariant representation of $\Linf(\GG/\HH)$ on $\cE$. Then
\[
\rho(\I_\HH)\phi_U(x)\rho(\I_\HH)=\rho(\I_\HH)\phi_U\bigl(E(x)\bigr)\rho(\I_\HH),\qqquad{x}\in\C_0^\uu(\hh{\GG}),
\]
where $\phi_U:\C_0^\uu(\hh{\GG})\to\cL(\cE)$ is the representation corresponding to $U$.
\end{lemma}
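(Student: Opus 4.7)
The plan is to reduce, by density and continuity, to $x$ of the form $\lambda^\uu(\omega)$ with $\omega\in\Lone(\GG)$, and then convert the claim into a manageable operator identity involving $U$. Write $p=\rho(\I_\HH)\in\cL(\cE)$. Since $E$, $\phi_U$, and left/right multiplication by $p$ are all bounded, and the set $\lambda^\uu(\Lone(\GG))$ is dense in $\C_0^\uu(\hh{\GG})$ (as used implicitly in the proof of Proposition~\ref{cond-exp}), it suffices to prove the lemma for $x=\lambda^\uu(\omega)$. For such $x$ one has $\phi_U(x)=(\omega\tens\id)(U)$, while Proposition~\ref{cond-exp} together with centrality of $\I_\HH$ in $\Linf(\GG)$ gives $\phi_U(E(x))=(\omega\tens\id)((\I_\HH\tens\I)U)$. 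Removing $\omega$ by arbitrariness, the desired equality reduces to the operator identity
\[
(\I\tens p)U(\I\tens p) = (\I_\HH\tens p)U(\I\tens p).
\]

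The key ingredient is the auxiliary identity
\[
\Delta_\GG(\I_\HH)(\I\tens\I_\HH) = \I_\HH\tens\I_\HH,
\]
a companion of the group-like relation $\Delta_\GG(\I_\HH)(\I_\HH\tens\I) = \I_\HH\tens\I_\HH$ recalled in Subsection~\ref{Closed/open}. To establish it, I observe that $\I_\HH$ is central in $\Linf(\GG)$, hence central in $\Linf(\GG/\HH)$, and moreover minimal there by Proposition~3.2 of \cite{KKS}; thus $\I_\HH\Linf(\GG/\HH) = \CC\I_\HH$. Since $\Delta_\GG(\I_\HH) = \rho_{\GG/\HH}(\I_\HH) \in \Linf(\GG)\vtens\Linf(\GG/\HH)$, it follows that $\Delta_\GG(\I_\HH)(\I\tens\I_\HH) = z\tens\I_\HH$ for some $z\in\Linf(\GG)$. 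Applying $(\id\tens\pi)$ to both sides and using $\alpha=(\id\tens\pi)\comp\Delta_\GG$, $\alpha(\I_\HH)=\I_\HH\tens\I$ (which holds because $\I_\HH\in\Linf(\GG/\HH)$), and $\pi(\I_\HH)=\I$, one concludes that $z=\I_\HH$.

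With this identity in place the rest is a short covariance manipulation. Using $U^*(\I\tens p)U=(\id\tens\rho)\Delta_\GG(\I_\HH)$, one computes
\[
(\I\tens p)U(\I\tens p) = U(\id\tens\rho)\bigl(\Delta_\GG(\I_\HH)(\I\tens\I_\HH)\bigr) = U(\I_\HH\tens p) = (\I_\HH\tens\I)U(\I\tens p),
\]
where the last step uses centrality of $\I_\HH$ in $\Linf(\GG)$. Left-multiplying both sides by $(\I\tens p)$ and invoking $p^2=p$ together with $(\I\tens p)(\I_\HH\tens\I)=\I_\HH\tens p$ yields the required equality $(\I\tens p)U(\I\tens p) = (\I_\HH\tens p)U(\I\tens p)$.

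The main obstacle is the auxiliary identity $\Delta_\GG(\I_\HH)(\I\tens\I_\HH) = \I_\HH\tens\I_\HH$, which is not formally contained in the group-like relation stated in Subsection~\ref{Closed/open} and relies on the extra structure of an open quantum subgroup through the minimality of $\I_\HH$ in $\Linf(\GG/\HH)$ and $\pi(\I_\HH)=\I$. Everything else is routine manipulation with the covariance relation and with the central projection $\I_\HH$.
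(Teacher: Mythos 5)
Your proof is correct and follows essentially the same route as the paper's: reduce by density to $x=\lambda^\uu(\omega)$, then combine covariance of $\rho$ with the identity $\Delta_\GG(\I_\HH)(\I\tens\I_\HH)=\I_\HH\tens\I_\HH$ and the description of $E$ from Proposition \ref{cond-exp} (your operator identity $(\I\tens p)U(\I\tens p)=(\I_\HH\tens p)U(\I\tens p)$ is just the sliced computation of the paper written without the $\omega$). The only genuine difference is that you derive the right-leg group-like relation from minimality and centrality of $\I_\HH$ together with $\alpha=(\id\tens\pi)\comp\Delta_\GG$, whereas the paper simply invokes the group-like property of $\I_\HH$ established in \cite{KKS}; this is a harmless (and arguably clarifying) addition, since the preliminaries only display the left-leg version.
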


\begin{proof}
Using covariance of $\rho$ and the group-like property of $\I_\HH$, for every $\omega\in \Lone(\GG)$ we have
{\allowdisplaybreaks
\begin{align*}
\rho(\I_\HH)\phi_U\bigl(\lambda^\uu(\omega)\bigr)\rho(\I_\HH)
&=(\omega\tens\id)\Bigl(\bigl(\I\tens\rho(\I_\HH)\bigr)U\bigl(\I\tens\rho(\I_\HH)\bigr)\Bigr)\\
&=(\omega\tens\id)\Bigl(U\bigl((\id\tens\rho)\Delta_\GG(\I_\HH)\bigr)\bigl(\I\tens\rho(\I_\HH)\bigr)\Bigr)\\
&=(\omega\tens\id)\Bigl(U\bigl((\id\tens\rho)\bigl(\Delta_\GG(\I_\HH)(\I\tens\I_\HH)\bigr)\bigr)\Bigr)\\
&=(\omega\tens\id)\Bigl(U\bigl(\I_\HH\tens\rho(\I_\HH)\bigr)\Bigr)\\
&=\bigl((\I_\HH\cdot{\omega}\tens\id)(U)\bigr)\rho(\I_\HH).
\end{align*}
}But the same calculations also give
\[
\rho(\I_\HH)\phi_U\bigl(\lambda^\uu(\I_\HH\cdot\omega)\bigr)\rho(\I_\HH)=
\bigl((\I_\HH\cdot\omega)\tens\id)U\bigr)\rho(\I_\HH),
\]
and hence lemma follows from Proposition \ref{cond-exp}.
\end{proof}

The following is the main result of this section.

\begin{theorem}\label{condition}
Let $U\in\M(\C_0(\GG)\tens\cK(\cE))$ be a unitary representation of $\GG$ on a  Hilbert $B$-module $\cE$ and suppose that $\rho$ is a $U$-covariant representation of $\Linf(\GG/\HH)$ on $\cE$. Then the submodule $\cE_0=\rho(\I_\HH)\cE$ is $\C_0^\uu(\hh{\HH})$-invariant, so that there is a natural representation of $\HH$ on $\cE_0$. Furthermore there is a unitary $T\in\cL(\IndR(\cE_0),\cE)$ intertwining the respective actions of $\C_0^\uu(\hh{\GG})$.
\end{theorem}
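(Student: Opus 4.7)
The plan is to mimic Rieffel's classical imprimitivity argument in four steps: invariance of $\cE_0$ under the image of $\C_0^\uu(\hh{\HH})$ in $\cL(\cE)$, construction of a candidate intertwiner $T$ on an algebraic tensor product, verification that $T$ is isometric and intertwines the $\C_0^\uu(\hh{\GG})$-actions, and finally a density-of-range argument to upgrade $T$ to a unitary.

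For the invariance step I would write each element of $\hh{\pi}^\uu(\C_0^\uu(\hh{\HH}))$ in the form $(\omega\cdot\I_\HH\tens\id)(\wW^\GG)$ with $\omega\in\Lone(\GG)$ (as in the proof of Proposition \ref{cond-exp}), so that $\phi_U(\hh{\pi}^\uu(y))\rho(\I_\HH)=(\omega\tens\id)\bigl((\I_\HH\tens\rho(\I_\HH))U\bigr)$. Combining the $U$-covariance of $\rho$ in the form $(\I\tens(\I-\rho(\I_\HH)))U=U(\id\tens\rho)\Delta_\GG(\I-\I_\HH)$ with the group-like identity $\Delta_\GG(\I_\HH)(\I\tens\I_\HH)=\I_\HH\tens\I_\HH$ (the very same identity already exploited in the proof of Lemma \ref{1_H-cond.exp.-1_H}) rewrites $(\I_\HH\tens(\I-\rho(\I_\HH)))U(\I\tens\rho(\I_\HH))$ as $(\I_\HH\tens\I)U((\I-\I_\HH)\tens\rho(\I_\HH))$. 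Slicing by $\omega$ and using centrality of $\I_\HH$ in $\Linf(\GG)$ kills this expression, yielding $(\I-\rho(\I_\HH))\phi_U(\hh{\pi}^\uu(y))\rho(\I_\HH)=0$. Hence $\cE_0=\rho(\I_\HH)\cE$ is $\hh{\pi}^\uu(\C_0^\uu(\hh{\HH}))$-invariant, giving the promised representation of $\HH$ on $\cE_0$.

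With invariance established I would define $T$ on the algebraic $\C_0^\uu(\hh{\HH})$-balanced tensor product by $T(a\atens\xi)=\phi_U(a)\xi$; the balancing relations are respected precisely by the invariance just proved. To check isometry I would expand $\scp{T(a\atens\xi)}{T(b\atens\eta)}$ as $\scp{\xi}{\rho(\I_\HH)\phi_U(a^*b)\rho(\I_\HH)\eta}$ and then apply Lemma \ref{1_H-cond.exp.-1_H} to replace the middle operator by $\phi_U(E(a^*b))$, which matches the inner product defining $\IndR(\cE_0)$. The intertwining of the $\C_0^\uu(\hh{\GG})$-actions is immediate from $T(ab\atens\xi)=\phi_U(a)T(b\atens\xi)$, and standard facts about isometric $B$-linear maps between Hilbert $B$-modules guarantee that once the range of $T$ is shown to be dense, $T$ is automatically unitary.

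The principal obstacle is this final density statement. I would argue by contradiction: assume $\eta\in\cE$ satisfies $\rho(\I_\HH)\phi_U(a)\eta=0$ for every $a\in\C_0^\uu(\hh{\GG})$. Specializing $a=\lambda^\uu(\omega)$ and letting $\omega$ range over $\Lone(\GG)$ forces $(\I\tens\rho(\I_\HH))U(\xi\tens\eta)=0$ for every $\xi\in\Ltwo(\GG)$. Applying $U^*$ and using $U^*(\I\tens\rho(\I_\HH))U=(\id\tens\rho)\Delta_\GG(\I_\HH)$ converts this into $(\id\tens\rho)\Delta_\GG(\I_\HH)(\xi\tens\eta)=0$, and slicing against $\omega$ yields $\rho\bigl((\omega\tens\id)\Delta_\GG(\I_\HH)\bigr)\eta=0$ for every $\omega\in\Lone(\GG)$. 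Since $\bigl\{(\omega\tens\id)\Delta_\GG(\I_\HH)\st\omega\in\Lone(\GG)\bigr\}$ is a weak-$*$ dense ideal in $\Linf(\GG/\HH)$ (as recalled in the proof of Lemma \ref{theta-inv}) and $\rho$ is unital and strict, this forces $\eta=0$, finishing the argument.
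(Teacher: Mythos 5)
Your first three steps (invariance of $\cE_0$, the isometry of $T$ via Lemma \ref{1_H-cond.exp.-1_H}, and the intertwining) are essentially the paper's argument; your invariance computation is a harmless variant (killing the corner $(\I-\rho(\I_\HH))\phi_U(\cdot)\rho(\I_\HH)$ rather than deriving the commutation of $\phi_U\bigl(\C_0^\uu(\hh{\HH})\bigr)$ with $\rho(\I_\HH)$ directly), though note that your intermediate identity $\phi_U(\hh{\pi}^\uu(y))\rho(\I_\HH)=(\omega\tens\id)\bigl((\I_\HH\tens\rho(\I_\HH))U\bigr)$ already presupposes that commutation; it should read $(\omega\tens\id)\bigl((\I_\HH\tens\I)U(\I\tens\rho(\I_\HH))\bigr)$, and the rest of your manipulation then goes through.

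The genuine gap is in the final density step. What you prove is that any $\eta\in\cE$ with $\rho(\I_\HH)\phi_U(a)\eta=0$ for all $a\in\C_0^\uu(\hh{\GG})$ vanishes, i.e.\ that the closed submodule $F=\Lin\bigl\{\phi_U(a)\rho(\I_\HH)v\st a\in\C_0^\uu(\hh{\GG}),\,v\in\cE\bigr\}$ (the closure of the range of $T$) has trivial orthogonal complement. For Hilbert \cst-modules this does \emph{not} imply $F=\cE$: a closed submodule with zero orthogonal complement need not be dense (take $B=\cE=\C([0,1])$ and $F=\{f\st f(0)=0\}$). Since the theorem is stated for Hilbert $B$-modules — and is applied in Section \ref{sect:equiv} to genuine modules, not only Hilbert spaces — your step 4 only works in the Hilbert-space case, and no orthogonality argument can close it. The paper's proof avoids this precisely: it shows that $\I_\cE=\rho(\I_{\GG/\HH})$ lies in the \emph{strict} closure of $\spn\bigl\{\phi_U(a)\rho(\I_\HH)\phi_U(b)\st a,b\in\C_0^\uu(\hh{\GG})\bigr\}$, by expressing $\I_{\GG/\HH}$ through slices of $\Delta_\GG(\I_\HH)$ (Theorem 3.3 of \cite{KKS} together with strictness of $\rho$), rewriting these via covariance as slices of $U^*(\I\tens\rho(\I_\HH))U$, and expanding over an orthonormal basis of $\Ltwo(\GG)$ into a strictly convergent sum $\sum_i\phi_U(a_i)\rho(\I_\HH)\phi_U(b_i)$; applying such a net to an arbitrary $v\in\cE$ gives norm approximation of $v$ by elements of $F$, which is what surjectivity of $T$ actually requires. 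To repair your proposal you would need this (or some other) argument producing an approximate identity for $\cE$ inside the range, rather than testing against the orthogonal complement.
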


\begin{proof}
Let $\phi_U:\C_0^\uu(\hh{\GG})\to\cL(\cE)$ be the representation corresponding to $U$. By covariance of $\rho$ we have
\[
(\I_\HH\tens\I)U^*\bigl(\I\tens\rho(\I_\HH)\bigr)U(\I_\HH\tens\I)=(\id\tens\rho)\bigl((\I_\HH\tens\I)\Delta_\GG(\I_\HH)(\I_\HH\tens\I)\bigr)
=\I_\HH\tens\rho(\I_\HH),
\]
which implies $(\I\tens\rho(\I_\HH))U(\I_\HH\tens \I) = U(\I_\HH\tens \I)(\I\tens\rho(\I_\HH))$, and hence
\[
\rho(\I_\HH)(\omega\tens\id)\bigl(U(\I_\HH\tens\I)\bigr)=(\omega\tens\id)\bigl(U(\I_\HH\tens\I)\bigr)\rho(\I_\HH)
\]
for all $\omega\in\Lone(\GG)$. The subspace ${\bigl\{(\omega\tens\id)\bigl(U(\I_\HH\tens\I)\bigr)\st}\omega\in\Lone(\GG)\bigr\}$ is norm dense in $\phi_U\bigl(\C_0^\uu(\hh{\HH})\bigr)$ because $\lambda^\uu(\Lone(\GG))$ is dense in $\C_0^\uu(\hh{\GG})$, and therefore the first part of the statement follows.

For $a\in\C_0^\uu(\hh{\GG})$ and $v\in\cE$ we will denote by $a\dtens{v}$ the class of $a\tens{v}$ in $\IndR(\cE)$. We claim the map $T$ defined by
\begin{equation}\label{defT}
T(a\dtens{v})=\phi_U(a)v,\qqquad{a}\in\C_0^\uu(\hh{\GG}),\:v\in\cE,
\end{equation}
extends to a unitary between $\IndR(\cE_0)$ and $\cE$.

First, to see that $T$ is an isometry, apply Lemma \ref{1_H-cond.exp.-1_H} to observe that for $a,b\in\C_0^\uu(\hh{\GG})$ and $v,v'\in\cE_0$
\[
\begin{split}
\scp{\phi_U(a)v}{\phi_U(b)v'}_{\cE}&=\scp{v}{\rho(\I_\HH)\phi_U(a^*b)\rho(\I_\HH)v'}_{\cE}\\
&=\scp{v}{\phi_U\bigl(E(a^*b)\bigr)v'}_{\cE}\\
&=\scp{a\dtens{v}}{b\dtens{v'}}_{\IndR(\cE_0)},
\end{split}
\]

Next, in order to prove surjectivity of $T$ we show that the set
\begin{equation}\label{lindens1}
{\bigl\{\phi_U(a)\rho(\I_\HH)\phi_U(b)v\st}{a,b}\in\C_0^\uu(\hh{\GG}),\:v\in\cE\bigr\},
\end{equation}
clearly contained in the image of $T$, spans a dense subspace of $\cE$. By \cite[Theorem 3.3]{KKS} and covariance of $\rho$ we get
\begin{align*}
\I_{\cE}=\rho(\I_{\GG/\HH})
&\in\Lin\bigl\{\rho\bigl((\omega_{\xi,\eta}\tens\id)\Delta_\GG(\I_\HH)\bigr)\st\xi,\eta\in\Ltwo(\GG)\bigr\}\\
&=\Lin\bigl\{(\omega_{\xi,\eta}\tens\id)\bigl(U^*(\I\tens\rho(\I_\HH))U\bigr)\st\xi,\eta\in\Ltwo(\GG)\bigr\}.
\end{align*}
Now let $\{e_i\}_{i\in{I}}$ be an orthonormal basis for $\Ltwo(\GG)$. Then for every $\xi,\eta\in\Ltwo(\GG)$ we have
\[
(\omega_{\xi,\eta}\tens\id)\bigl(U^*(\I\tens\rho(\I_\HH))U\bigr)=\sum_{i\in I}\bigl((\omega_{\xi,e_i}\tens\id)(U^*)\bigr)\rho(\I_\HH)
\bigl((\omega_{e_i,\eta}\tens\id)(U)\bigr)
\]
with the sum strictly convergent. Since $(\omega_{\xi,e_i}\tens\id)(U^*)=\phi_U\bigl((\omega_{e_i,\xi}\tens\id)({\wW^\GG}^*)\bigr)$ and $(\omega_{e_i,\eta}\tens\id)(U)=\phi_U\bigl((\omega_{e_i,\eta}\tens\id)(\wW^\GG)\bigr)$ it follows that $\I_\cE$ is in the strict closure of $\bigl\{\phi_U(a)\rho(\I_\HH)\phi_U(b)\st{a,b}\in\C_0^\uu(\hh{\GG})\bigr\}$ and the proof of surjectivity is complete. Theorem 3.5 of \cite{Lance} shows that in fact $T$ is adjointable and unitary (as it is obviously $B$-linear).

Finally, the fact that $T$ intertwines the respective $\C_0^\uu(\hh{\GG})$-actions is obvious from \eqref{defT}.
\end{proof}

\section{Equivalence of induction processes}\label{sect:equiv}

In this section we prove the main result of the paper: for open quantum subgroups all three induction procedures
$\IndK$, $\IndR$, and $\IndV$ are canonically equivalent. As mentioned in the introduction, the key technical tool will be the imprimitivity result of Section \ref{Sec:imprim}.

\subsection{Equivalence of $\IndK$ and $\IndR$}

We first show the equivalence of the Rieffel induction with that of Kustermans.

\begin{theorem}
Let $\HH$ be an open quantum subgroup of $\GG$, and let $U\in\Linf(\HH)\vtens\B(\sK)$ be a unitary representation of $\HH$ on a Hilbert space $\sK$. Then $\IndK(U)$ and $\IndR(U)$ are unitarily equivalent.
\end{theorem}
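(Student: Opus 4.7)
The strategy is to invoke the imprimitivity theorem~\ref{condition}. Concretely, I will equip $\IndK(\sK)$ with an $\IndK(U)$-covariant representation $\rho$ of $\Linf(\GG/\HH)$ whose $\I_\HH$-cut-out subspace, together with its induced $\HH$-action, recovers $(\sK, U)$ up to unitary equivalence; the theorem will then deliver the desired equivalence $\IndR(U) \cong \IndK(U)$.

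\textbf{The covariant representation.} I will define $\rho(y)(X_* w) := ((y\tens\I)X)_* w$ for $y \in \Linf(\GG/\HH)$, $X \in \cP$, and $w \in \sH_\theta \tens \sK$; this is legitimate because $(y\tens\I)X \in \cP$, as noted in Subsection~\ref{kusrep}. Well-definedness, the $*$-homomorphism property with $\rho(y)^* = \rho(y^*)$, and the estimate $\|\rho(y)\| \le \|y\|$ all follow from \eqref{props1} together with the operator inequality $X^*(y^*y \tens \I)X \le \|y\|^2 X^*X$ inside $\Linf(\GG/\HH) \vtens \B(\sK)$. The covariance identity $\IndK(U)^*(\I \tens \rho(y))\IndK(U) = (\id \tens \rho)\Delta_\GG(y)$ will then be verified by applying both sides to $\eta \tens X_*w$, expanding via the explicit formula for $\IndK(U)^*$ and the coproduct identity $(\Delta_\GG \tens \id)((y\tens\I)X) = (\Delta_\GG(y) \tens \I)(\Delta_\GG \tens \id)X$.

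\textbf{Identification of $\rho(\I_\HH)\IndK(\sK)$.} I set $\iota:\sK \to \IndK(\sK)$, $\iota(v) := (U^*)_*(\Lambda_\theta(\I_\HH)\tens v)$. Using \eqref{props1}, $UU^* = \I_\HH \tens \I$, and the normalization $\theta(\I_\HH) = 1$, a short computation yields $\|\iota(v)\| = \|v\|$. The inclusion $\iota(\sK) \subset \rho(\I_\HH)\IndK(\sK)$ follows from $(\I_\HH\tens\I)U^* = U^*$. For the reverse inclusion, the key claim is that
\[
\cP \cap (\Linf(\HH)\vtens\B(\sK)) = U^*\cdot(\CC \tens \B(\sK)).
\]
Indeed, for $Y$ in this intersection, using $(\alpha\tens\id)U^* = U^*_{23}U^*_{13}$ (established in the proof of Theorem~\ref{coamenability_dropped}), one computes $(\alpha\tens\id)(UY) = U_{13}U_{23}U^*_{23}Y_{13} = (UY)_{13}$, so $UY$ is $\alpha\tens\id$-fixed in $\Linf(\HH)\vtens\B(\sK)$. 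Since the $\alpha$-fixed subalgebra of $\Linf(\HH) = \I_\HH\Linf(\GG)$ is $\I_\HH\Linf(\GG/\HH) = \CC\I_\HH$, this forces $UY \in \CC \tens \B(\sK)$, hence $Y = U^*(\I \tens M)$ for some $M \in \B(\sK)$. Therefore, for every $X \in \cP$ we have $(\I_\HH\tens\I)X = U^*(\I\tens M_X)$, and by \eqref{props2} $\rho(\I_\HH)X_*w = (U^*)_*(\I \tens M_X)w \in \iota(\sK)$.

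\textbf{Matching the $\HH$-action and conclusion.} That the $\C_0^\uu(\hh{\HH})$-action on $\iota(\sK)$ inherited via Theorem~\ref{condition} is intertwined with $\phi_U$ by $\iota$ is precisely what the matrix-coefficient calculation inside the proof of Theorem~\ref{coamenability_dropped} establishes, once extended from $\Omega_\varrho$ to arbitrary vectors by bilinearity. Theorem~\ref{condition} now delivers a unitary $T:\IndR(\sK)\to\IndK(\sK)$ intertwining the respective $\C_0^\uu(\hh{\GG})$-actions, and hence the representations $\IndR(U)$ and $\IndK(U)$. The main obstacle I expect is the covariance identity in Step~1, since the identities for $X_*$ in \cite{KustermansInduced} are stated only for right multiplication; handling left multiplication by $\Delta_\GG(y) \tens \I$ on $\cP_{\Ltwo(\GG)}$ will require some additional bookkeeping, most likely via an ancillary lemma extending $\rho$ to a well-behaved action on the various $\cP_\sH$.
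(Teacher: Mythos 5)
Your proposal is correct and takes essentially the same route as the paper: the same covariant representation $\rho(y)\colon X_*w\mapsto((y\tens\I)X)_*w$, the same reduction via Theorem \ref{condition}, and the same identification $v\mapsto(U^*)_*\bigl(\Lambda_\theta(\I_\HH)\tens v\bigr)$ of $\sK$ with $\rho(\I_\HH)\IndK(\sK)$ (your fixed-point description of $\cP\cap\bigl(\Linf(\HH)\vtens\B(\sK)\bigr)$ is just a reformulation of the paper's identity $U(\I_\HH\tens\I)X=\I_\HH\tens z^X$). The two points you leave informal are handled as follows in the paper: the covariance of $\rho$ needs no ancillary lemma, being exactly the left-module identity \cite[Result 4.9.(2)]{KustermansInduced} (applicable since $\Delta_\GG(y)\in\Linf(\GG)\vtens\Linf(\GG/\HH)$ for $y\in\Linf(\GG/\HH)$), while for the equivariance the paper performs a direct computation with $\Upsilon^*$; your shortcut via the matrix-coefficient computation of Theorem \ref{coamenability_dropped} is legitimate, but note it only yields the compression identity $\iota^*\phi_{\IndK(U)}\bigl(\hh{\pi}^\uu(\hh{a})\bigr)\iota=\phi_U(\hh{a})$, which becomes a genuine intertwining only when combined with the $\C_0^\uu(\hh{\HH})$-invariance of $\rho(\I_\HH)\IndK(\sK)$ supplied by the first part of Theorem \ref{condition}.
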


\begin{proof}
The $*$-homomorphism $\rho:\Linf(\GG/\HH)\to\B(\IndK(\sK))$ defined by
\[
\rho(y)(X\tens\xi\tens{v})=\bigl((y\tens\I)X\bigr)\tens\xi\tens{v},\qqquad{y}\in\Linf(\GG/\HH),\:X\in\cP,\:\xi\in\Ltwo(\GG/\HH),\:v\in\sK
\]
is normal by \cite[Result 4.8]{KustermansInduced}, and moreover for $y,X,\xi$ and $v$ as above we have
{\allowdisplaybreaks
\begin{align*}
\IndK(U)^*\bigl(\I\tens\rho(y)\bigr)&\IndK(U)\Bigl(\bigl(\bigl(\Delta_\GG\tens\id)(U^*)\bigr)_*\Upsilon_{12}^*(\xi\tens{x}\tens{v})\Bigr)\\
&=\IndK(U)^*\bigl(\I\tens\rho(y)\bigr)\bigl(\xi\tens(U^*)_*(x\tens{v})\bigr)\\
&=\IndK(U)^*\Bigl(\xi\tens\bigl((y\tens\I)U^*\bigr)_*(x\tens v)\Bigr)\\
&=\Bigl((\Delta_\GG\tens\id)\bigl((y\tens\I)U^*\bigr)\Bigr)_*\Upsilon_{12}^*(\xi\tens{x}\tens{v})\\
&=\Bigl(\bigl(\Delta_\GG(y)\tens\I\bigr)(\Delta_\GG\tens\id)(U^*)\Bigr)_*\Upsilon_{12}^*(\xi\tens{x}\tens{v})\\
&\stackrel{\scriptscriptstyle{6}}{=}(\id\tens\rho)\bigl(\Delta_\GG(y)\bigr)
\Bigl(\bigl((\Delta_\GG\tens\id)(U^*)\bigr)_*\Upsilon_{12}^*(\xi\tens{x}\tens{v})\Bigr)
\end{align*}
}which shows $\rho$ is $\IndK(U)$-covariant (in $\stackrel{\scriptscriptstyle{6}}{=}$ we used \cite[Result 4.9.(2)]{KustermansInduced}). Thus, in view of Theorem \ref{condition}
it now suffices to show there is a unitary $T:\rho(\I_\HH)\IndK(\sK)\to\sK$ that intertwines $\C_0^\uu(\hh{\HH})$-actions.

For every $X\in\cP$ we have
\[
\begin{split}
(\Delta_\HH\tens\id)\bigl(U(\I_\HH\tens\I)X\bigr)&=U_{13}U_{23}(\I_\HH\tens\I_\HH\tens\I)U_{23}^*X_{13}\\
&=U_{13}(\I_\HH\tens\I_\HH\tens\I)X_{13}
\end{split}
\]
which implies there exists $z^X\in\B(\sK)$ such that
\begin{equation}\label{eq1_XU}
U(\I_\HH\tens\I)X=\I_\HH\tens{z^X}.
\end{equation}
Now define the map $T:\rho(\I_\HH)\IndK(\sK)\to\Ltwo(\GG/\HH)\tens\sK$ by
\[
(\I_\HH\tens\I)X\tens\xi\tens{v}\longmapsto\I_\HH\xi\tens{z^X}v,\qqquad{X}\in\cP,\:\xi\in\Ltwo(\GG/\HH),\:v\in\sK.
\]
By \eqref{eq1_XU} $T$ is isometric. Since $\I_{\HH}$ is a minimal projection in $\B(\Ltwo(\GG/\HH))$, by identifying $\I_\HH\tens\sK\cong\sK$ we may consider $T$ as an isometric map taking values in $\sK$.

Conversely to \eqref{eq1_XU}, observe that for each $z\in\B(\sK)$ we have $U^*(\I_\HH\tens{z})\in\cP$, which implies that $T$ maps onto $\sK$, and hence is a unitary from $\rho(\I_\HH)\IndK(\sK)$ onto $\sK$. Its inverse is given by the formula
\begin{equation}\label{T*}
T^*(\I_\HH\xi\tens{v})=U^*\tens\I_\HH\xi\tens{v}=(U^*)_*(\I_\HH\xi\tens{v}).
\end{equation}
It remains to show $T$ intertwines the $\C_0^\uu(\hh{\HH})$-actions. First, recall the unitary $\Upsilon^*$ defined in \eqref{defups}, and note that since $\I_\HH\in\Linf(\GG/\HH)$ is a minimal central projection, we have
\begin{equation}\label{propups}
(\I_\HH\tens\I_\HH)\Upsilon^*(\eta\tens{y})= \rho_{\GG/\HH}(\I_\HH y)(\I_\HH\eta\tens\I_\HH)=(\I_\HH\tens\I_\HH)(\eta\tens{y})
\end{equation}
for all $\eta\in\Ltwo(\GG)$ and $y\in\gN_\theta$.

For $\eta\in\Ltwo(\GG)$, $\xi\in\Ltwo(\GG/\HH)$, $v\in\sK$ and $z\in\B(\sK)$, let $\zeta=\I_\HH\eta\tens\bigl(U^*(\I_\HH\tens{z})\bigr)\tens\xi\tens{v}$. Then
{\allowdisplaybreaks
\begin{align*}
\IndK(U)^*(\zeta)
&=\bigl((\Delta_\GG\tens\id)\bigl(U^*(\I_\HH\tens{z})\bigr)\bigr)_*\bigl(\Upsilon^*(\I_\HH\eta\tens\xi)\tens{v}\bigr)\\
&=\bigl((\Delta_\GG\tens\id)\bigl(U^*(\I_\HH\tens z)\bigr)\bigr)_*\bigl((\I_\HH\tens\I\tens\I)\bigl(\Upsilon^*(\eta\tens\xi)\tens{v}\bigr)\bigr)\\
&\stackrel{\scriptscriptstyle{7}}{=}
\Bigl(\bigl((\Delta_\GG\tens\id)\bigl(U^*(\I_\HH\tens z)\bigr)\bigr)(\I_\HH\tens\I\tens\I)\Bigr)_*\bigl(\Upsilon^*(\eta\tens\xi)\tens{v}\bigr)\\
&=\Bigl((\Delta_\HH\tens\id)\bigl(U^*(\I_\HH\tens{z})\bigr)\Bigr)_*\bigl(\Upsilon^*(\eta\tens\xi)\tens{v}\bigr)\\
&=\bigl(U^*_{23}U^*_{13}(\I_\HH\tens\I_\HH\tens{z})\bigr)_*\bigl(\Upsilon^*(\eta\tens\xi)\tens{v}\bigr)\\
&\stackrel{\scriptscriptstyle{7}}{=}
(U^*_{23}U^*_{13})_*(\I_\HH\tens\I_\HH\tens{z})\bigl(\Upsilon^*(\eta\tens\xi)\tens{v}\bigr)\\
&\stackrel{\scriptscriptstyle{8}}{=}
(U^*_{23}U^*_{13})_*(\I_\HH\tens\I_\HH\tens{z})(\eta\tens\xi\tens{v})\\
&\stackrel{\scriptscriptstyle{7}}{=}
(U^*_{23})_*U^*_{13}(\I_\HH\tens\I_\HH\tens{z})(\eta\tens\xi\tens{v})\\
&=(U^*_{23})_*U^*_{13}(\I_\HH\eta\tens\I_\HH\xi\tens{zv})\\
&=(U^*_{23})_*U^*(\id\tens{T})(\zeta)\\
&\stackrel{\scriptscriptstyle{9}}{=}
(\id\tens{T^*})U^*(\id\tens{T})(\zeta),
\end{align*}
}where in the equalities $\stackrel{\scriptscriptstyle{7}}{=}$ we applied \eqref{props2}, for $\stackrel{\scriptscriptstyle{8}}{=}$ we used \eqref{propups}, and in $\stackrel{\scriptscriptstyle{9}}{=}$ uses \eqref{T*}. This completes the proof.
\end{proof}

\subsection{Equivalence of $\IndR$ and $\IndV$}

Here we will proceed as above to show that the Rieffel and Vaes induction procedures are equivalent in the case of open quantum subgroups, also for representations on Hilbert modules.

Let for a moment $\HH$ be a closed quantum subgroup of $\GG$, $B$ be a \cst-algebra, and $\cE$ a Hilbert $B$-module. Recall the definition of $\cI$ and the map $\pil:\Linf(\hh{\HH})\to\cL(\Ltwo(\GG)\tens\cE)$ satisfying \eqref{Defpli} from Section \ref{vaesrep}, and observe that
\begin{equation}\label{pi_ell-relation}
\begin{split}
(\pil\tens\id)\comp(\id\tens\hh{\pi})\comp\Delta_{\hh{\HH}}
&=(\hh{\pi}\tens\phi_U\tens\hh{\pi})\comp(\Delta_{\hh{\HH}^\op}\tens\id)\comp\Delta_{\hh{\HH}}\\
&=\flip_{23}\comp\bigl((\hh{\pi}\tens\hh{\pi}\tens\phi_U)\comp(\Delta_{\hh{\HH}}\tens\id)\comp\Delta_{\hh{\HH}^\op}\bigr)\\
&=\flip_{23}\comp\Bigl((\Delta_{\hh{\GG}}\tens\id)\comp\bigl((\hh{\pi}\tens\phi_U)\Delta_{\hh{\HH}^\op}\bigr)\Bigr)\\
&=\flip_{23}\comp\bigl((\Delta_{\hh{\GG}}\tens\id)\comp\pil\bigr).
\end{split}
\end{equation}

We are ready for the main result of this subsection.

\begin{theorem}\label{mainres}
Let $\HH$ be an open quantum subgroup of $\GG$, and let $U\in\M(\C_0(\HH)\tens\cK(\cE_0))$ be a unitary representation of $\HH$ on a Hilbert $B$-module $\cE_0$. Then $\IndK(U)$ and $\IndR(U)$ are unitarily equivalent.
\end{theorem}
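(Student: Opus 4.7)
The plan is to repeat the strategy of the previous theorem (equivalence of $\IndK$ and $\IndR$ for representations on Hilbert spaces), now in the Hilbert $B$-module setting, with $\IndK(U)$ understood via the natural extension of Kustermans's construction in which $\sK$ is replaced by $\cE_0$ and $\B(\sK)$ by $\cK(\cE_0)$ throughout Subsection~\ref{kusrep}. The formal identities \eqref{props0} continue to hold in this extension since their derivation in \cite{KustermansInduced} is algebraic. The central tool is Theorem~\ref{condition}: once I produce an $\IndK(U)$-covariant representation $\rho$ of $\Linf(\GG/\HH)$ on $\IndK(\cE_0)$, I obtain $\IndK(U)\cong\IndR(\cE_0')$ where $\cE_0':=\rho(\I_\HH)\IndK(\cE_0)$ with its restricted $\C_0^\uu(\hh{\HH})$-action; the proof then reduces to identifying $\cE_0'$ with $\cE_0$ as representations of $\HH$.

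First I would define $\rho:\Linf(\GG/\HH)\to\cL(\IndK(\cE_0))$ on spanning elements by $\rho(y)(X\dtens w)=((y\tens\I)X)\dtens w$ for $y\in\Linf(\GG/\HH)$, $X\in\cP$, and $w\in\Ltwo(\GG/\HH)\tens\cE_0$. Normality follows as in the Hilbert space case, and $\IndK(U)$-covariance is a line-by-line repetition of the corresponding calculation in the previous theorem, using only the definition of $\IndK(U)$ and \eqref{props2} (now applied with $\cE_0$ in place of $\sK$).

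Next I would identify the corner. Because $U\in\Linf(\HH)\vtens\cK(\cE_0)$ and $\Delta_\HH(\I_\HH)=\I_\HH\tens\I_\HH$, for every $X\in\cP$ the element $U(\I_\HH\tens\I)X$ belongs to $\CC\I_\HH\tens\cL(\cE_0)$, yielding a unique $z^X\in\cL(\cE_0)$ with $U(\I_\HH\tens\I)X=\I_\HH\tens z^X$. Combined with the minimality of $\I_\HH$ in $\Linf(\GG/\HH)$ (so that $\I_\HH\xi\in\CC\Lambda_\theta(\I_\HH)$ for every $\xi\in\Ltwo(\GG/\HH)$) and the normalization $\theta(\I_\HH)=1$, the assignment $(\I_\HH\tens\I)X\dtens\xi\dtens v\longmapsto z^X v$ extends to a $B$-linear unitary $T:\cE_0'\to\cE_0$, with explicit inverse $T^*(v)=(U^*)_*(\Lambda_\theta(\I_\HH)\tens v)$.

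The remaining and principal technical step is the verification that $T$ intertwines the $\C_0^\uu(\hh{\HH})$-actions. The computation is the obvious module-valued analogue of the final display in the previous theorem's proof: one unfolds $\IndK(U)^*$ on a vector of the form $\I_\HH\eta\dtens U^*(\I_\HH\tens z)\dtens\xi\dtens v$, rearranges using \eqref{props2} and the identity \eqref{propups}, and collapses the result via \eqref{T*}. The main obstacle is precisely this final calculation: while each algebraic step is identical to the Hilbert space case, one must verify adjointability of the intermediate operators on the relevant internal tensor products of Hilbert $B$-modules, and track the $B$-valued inner product through the identifications forced by minimality of $\I_\HH$. Given \eqref{props0}, the formula \eqref{defups} for $\Upsilon^*$, and the group-likeness of $\I_\HH$, I expect no new difficulty beyond careful bookkeeping.
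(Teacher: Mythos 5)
Your proposal proves the wrong statement. Despite the typo in the theorem as printed (``$\IndK(U)$'' should read ``$\IndV(U)$''), Theorem \ref{mainres} is the second half of the programme announced in the subsection heading ``Equivalence of $\IndR$ and $\IndV$'': it asserts that \emph{Vaes's} induction of a Hilbert-module representation agrees with Rieffel's. Indeed $\IndK$ is defined, both in \cite{KustermansInduced} and in Subsection \ref{kusrep}, only for representations on Hilbert spaces, and the paper's proof works entirely with the Vaes data: the covariant map $\rho$ of \eqref{covrepVaes} supplied by the left $\GG\ltimes\Linf(\GG/\HH)$-module structure of $\widetilde{\cF}$, the imprimitivity bimodule $\cI$, and the representation $\pil$. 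After invoking Theorem \ref{condition} exactly as you do, the paper identifies the corner $\rho(\I_\HH)\IndV(\cE_0)$ with $\cE_0$ through the chain $\Ltwo(\GG)\tens\bigl(\rho(\I_\HH)\IndV(\cE_0)\bigr)\cong\I_\HH\cI\btens_{\pil}\bigl(\Ltwo(\GG)\tens\cE_0\bigr)\cong\Linf(\hh{\HH})\btens_{\pil}\bigl(\Ltwo(\GG)\tens\cE_0\bigr)\cong\Ltwo(\GG)\tens\cE_0$, and the substance of the proof is the verification, using \eqref{hatU}, \eqref{hatU-alphaF} and \eqref{pi_ell-relation}, that this composite commutes with $\B(\Ltwo(\GG))\tens\I$ and hence has the form $\id\tens{T}$. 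None of this appears in your argument.

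Even read on its own terms, your proposal has a genuine gap: the ``natural extension of Kustermans's construction'' to Hilbert $B$-modules is asserted rather than constructed. The space $\cP_\sH$ lives in the von Neumann tensor product $\B(\sH)\vtens\Linf(\GG)\vtens\B(\sK)$, and the operators $X_*$ of \cite[Proposition 4.6]{KustermansInduced}, together with the identities \eqref{props0}, are built from the GNS space of the weight $\theta$ and normality of the maps involved; replacing $\B(\sK)$ by $\cL(\cE_0)$ destroys this von Neumann algebraic framework, and the remark that ``the derivation is algebraic'' does not substitute for redoing it. Moreover, even if such an extension were carried out, you would still owe a proof that it coincides with one of the induction procedures actually defined in the paper. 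The correct route is the one the paper takes: work with $\IndV$, which is already defined for Hilbert modules, extract the covariant representation $\rho$ from the $\GG\ltimes\Linf(\GG/\HH)$-action on $\widetilde{\cF}$, and identify the corner via the imprimitivity bimodule.
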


\begin{proof}
Recall the map \eqref{covrepVaes} from Subsection \ref{vaesrep}. In light of Proposition \ref{condition} we only need to show that there is a unitary equivalence $T:\rho(\I_\HH)\IndV(\cE_0)\to\cE_0$ which intertwines the restrictions of $\C_0^\uu(\hh{\GG})$ actions to $\C_0^\uu(\hh{\HH})$; note that strictness of $\rho:\Linf(\GG/\HH)\to\cL(\IndV(\cE_0))$ was proved in \cite{Vaes-induction}.

Applying \eqref{covrepVaes} to the identification \eqref{box-iden} we get
\[
\Ltwo(\GG)\tens\bigl(\rho(\I_\HH)\IndV(\cE_0)\bigr)\cong\I_\HH\cI\btens_{\pil}\Ltwo(\GG)\tens\cE_0 .
\]
But for every $v\in\cI$ if we consider $x=\I_\HH{v}$ as a map on $\Ltwo(\HH)=\I_\HH\Ltwo(\GG)$, since $\Linf(\GG/\HH)\subset\hh{\pi}'\bigl(\Linf(\hh{\HH})'\bigr)$ (cf.~remarks after \cite[Definition 4.1]{Vaes-induction}), it follows that $xy=yx$ for all $y\in\Linf(\hh{\HH})'$ and hence $\I_\HH{v}\in\Linf(\hh{\HH})$. On the other hand it is obvious that every $x\in\Linf(\hh{\HH})$ is of the form $x=\I_\HH{v}$ for some $v\in\cI$.

Hence, we have the identifications
\begin{equation}\label{reps-ident}
\begin{split}
\Ltwo(\GG)\tens\bigl(\rho(\I_\HH)\IndV(\cE_0)\bigr)&\cong\I_\HH\cI\btens_{\pil}\bigl(\Ltwo(\GG)\tens\cE_0\bigr)\\
&\cong\Linf(\hh{\HH})\btens_{\pil}\bigl(\Ltwo(\GG)\tens\cE_0\bigr)\\
&\cong\Ltwo(\GG)\tens\cE_0 .
\end{split}
\end{equation}
Now our goal is to show the above identification $\Ltwo(\GG)\tens\bigl(\rho(\I_\HH)\IndV(\cE_0)\bigr)\cong\Ltwo(\GG)\tens\cE_0$ is given by a map of the form $\id\tens{T}$, where $T:\rho(\I_\HH)\IndV(\cE_0)\to\cE_0$ is as desired above. For this, we will show the latter identification commutes with the right action of $\Linf(\hh{\GG})\tens\I$ and the canonical action of $\Linf(\GG)'\tens\I$, and hence with $\B(\Ltwo(\GG))\tens\I$.

The first two identifications in \eqref{reps-ident} preserve the corresponding $\bicorresp{{\;\bullet\;}}{\Linf(\hh{\GG})}{\Linf(\hh{\GG})}{\Linf(\GG)'}$ structures. Therefore, it remains to show that the last identification
\[
\iota:\Linf(\hh{\HH})\btens_{\pil}\bigl(\Ltwo(\GG)\tens\cE_0\big)\ni\hh{a}\tens\zeta\longmapsto\pil(\hh{a})\zeta\in\Ltwo(\GG)\tens\cE_0
\]
commutes with $\B(\Ltwo(\GG))\tens\I$.

It is clear that the right actions of $\Linf(\hh{\GG})$ on $\Linf(\hh{\HH})\btens\limits_{\pil}\Ltwo(\GG)\tens\cE_0$ and $\Ltwo(\GG)\tens\cE_0$ are intertwined by $\iota$. To see that $\iota$ also intertwines the action of $\Linf(\GG)'$, let $x'=(\id\tens\omega)\vv^{\hh{\GG}}\in\Linf(\hh{\GG})'$, $v\in\cI$, and
$\zeta\in\Ltwo(\GG)\tens\cE_0$ and note that we have
{\allowdisplaybreaks
\begin{align*}
\iota\bigl(\pi'(x')&(\I_\HH{v}\btens_{\pil}\zeta)\bigr)=\iota\bigl(\pi'\bigl((\id\tens\omega)(\vv^{\hh{\GG}})\bigr)(\I_\HH{v}\btens_{\pil}\zeta)\bigr)\\
&\stackrel{\scriptscriptstyle{10}}{=}
\iota\bigl(\bigl((\id\tens\omega)(\hh{U})\bigr)(\I_\HH v\btens_{\pil}\zeta)\bigr)
\stackrel{\scriptscriptstyle{11}}{=}
(\iota\tens\omega)\bigl(\alpha_{\widetilde{\cF}}(\I_\HH{v}\btens_{\pil}\zeta)\bigr)\\
&=(\iota\tens\omega)\bigl(\alpha_{\cI}(\I_\HH{v})\btens_{\pil\tens\id}\alpha_{\Ltwo(\GG)\tens\cE_0}(\zeta)\bigr)\\
&=(\iota\tens\omega)\bigl(\vv^{\hh{\GG}}(\I_\HH{v}\tens\I)(\id\tens\hh{\pi})(\vv^{\hh{\HH}})^*\btens_{\pil\tens\id}\vv^{\hh{\GG}}_{13}(\zeta\tens\I)\bigr)\\
&=(\iota\tens\omega)
\bigl(\vv^{\hh{\GG}}(\I_\HH{v}\tens\I)(\id\tens\hh{\pi})\bigl((\I_\HH\tens\I_\HH)(\vv^{\hh{\GG}})^*(\I_\HH\tens\I_\HH)\bigr)\btens_{\pil\tens\id}
\vv^{\hh{\GG}}_{13}(\zeta\tens\I)\bigr)\\
&=(\iota\tens\omega)
\bigl((\I_\HH\tens\I)\vv^{\hh{\GG}}(\I_\HH{v}\I_\HH\tens\I)(\vv^{\hh{\GG}})^*(\I_\HH\tens\I)\btens_{\pil\tens\id}\vv^{\hh{\GG}}_{13}(\zeta\tens\I)\bigr)\\
&=(\iota\tens\omega)\bigl((\I_\HH\tens\I)\Delta_{\hh{\GG}}(\I_\HH{v}\I_\HH)(\I_\HH\tens\I)\btens_{\pil\tens\id}\vv^{\hh{\GG}}_{13}(\zeta\tens\I)\bigr)\\
&=(\iota\tens\omega)\bigl((\id\tens\hh{\pi})\Delta_{\hh{\HH}}(\I_\HH{v}\I_\HH)\btens_{\pil\tens\id}\vv^{\hh{\GG}}_{13}(\zeta\tens\I)\bigr)\\
&=(\id\tens\omega)(\iota\tens\id)\bigl((\id\tens\hh{\pi})\Delta_{\hh{\HH}}(\I_\HH{v})\btens_{\pil\tens\id}\vv^{\hh{\GG}}_{13}(\zeta\tens\I)\bigr)\\
&=(\id\tens\omega)
\Bigl(\bigl((\pil\tens\id)(\id\tens\hh{\pi})\Delta_{\hh{\HH}}(\I_\HH{v})\bigr)\vv^{\hh{\GG}}_{13}(\zeta\tens\I)\Bigr)\\
&\stackrel{\scriptscriptstyle{12}}{=}
(\id\tens\omega)
\Bigl(\bigl(\flip_{23}\bigl((\Delta_{\hh{\GG}}\tens\id)\pil(\I_\HH{v})\bigr)\bigr)\vv^{\hh{\GG}}_{13}(\zeta\tens\I)\Bigr)\\
&=(\id\tens\omega)\Bigl(\bigl(\vv^{\hh{\GG}}_{13}\bigl[\pil(\I_\HH{v})\tens\id\bigr](\vv^{\hh{\GG}}_{13})^*\bigr)\vv^{\hh{\GG}}_{13}(\zeta\tens\I)\Bigr)\\
&=(\id\tens\omega)\Bigl(\vv^{\hh{\GG}}_{13}\bigl(\pil(\I_\HH{v})\zeta\tens\I\bigr)\Bigr)\\
&=\Bigl(\bigl[(\id\tens\omega)\vv^{\hh{\GG}}\bigr]\tens\I\Bigr)\bigl(\iota(\I_\HH{v}\btens_{\pil}\zeta)\bigr)\\
&=\pi'(x')\bigl(\iota(\I_\HH{v}\btens_{\pil}\zeta)\bigr),
\end{align*}
}where we used \eqref{hatU} in $\stackrel{\scriptscriptstyle{10}}{=}$, \eqref{hatU-alphaF} in $\stackrel{\scriptscriptstyle{11}}{=}$, and identities \eqref{pi_ell-relation} in $\stackrel{\scriptscriptstyle{12}}{=}$.

Finally, we show $T$ intertwines the actions of $\C_0^\uu(\hh{\HH})$. Note that the identifications in \eqref{reps-ident} are equivariant with respect to the left action of $\Linf(\hh{\HH})$, which in the first three terms is given by the restriction to $\hh{\pi}(\Linf(\hh{\HH}))\subset\Linf(\hh{\GG})$ of the left action of $\Linf(\hh{\GG})$ from their corresponding $\bicorresp{{\;\bullet\;}}{\Linf(\hh{\GG})}{\Linf(\hh{\GG})}{\Linf(\GG)'}$ structures, and in the last term is given by $\pil$. The equivariance of the first identification follows from the construction, and the other are easy to observe.

Now, for $\hh{x}\in\Linf(\hh{\GG})$ denote by $\tilpil(\hh{x})\in\cL(\Ltwo(\GG)\tens\IndV(\cE_0))$ the left action map in the first term of \eqref{reps-ident}. Then, we have
\[
(\id\tens{T})\tilpil\bigl(\hh{\pi}(\hh{a})\bigr)=\pil(\hh{a})(\id\tens{T})
\]
for all $\hh{a}\in\Linf(\hh{\HH})$. Since the second legs of $\tilpil\bigl(\hh{\pi}\bigl(\Linf(\hh{\HH})\bigr)\bigr)$ and $\pil\bigl(\Linf(\hh{\HH})\bigr)$ contain a dense subspace of $\phi_{\IndV(U)}\bigl(\C_0^\uu(\hh{\HH})\bigr)$ and $\phi_U\bigl(\C_0^\uu(\hh{\HH})\bigr)$, respectively, it follows that $T$ intertwines the actions of $\C_0^\uu(\hh{\HH})$, which completes the proof.
\end{proof}

As an immediate consequence of the above theorem, we observe that one may drop strong regularity assumption in the Imprimitivity Theorem for $\IndV$ (\cite[Theorem 5.1]{Vaes-induction}), when the subgroup in question is open.

\begin{theorem}
Let $\GG$ be a locally compact quantum group and $\HH$ an open quantum subgroup of $\GG$. A unitary representation $U\in\Rep(\GG,\cF)$ of $\GG$ is induced
from a representation of $\HH$ in the sense of Vaes if and only if there is a strictly continuous covariant representation of $\Linf(\GG/\HH)$ on the Hilbert module $\cF$.
\end{theorem}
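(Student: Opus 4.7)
The plan is to derive this theorem as a direct consequence of Theorem \ref{condition} and Theorem \ref{mainres}; this is the payoff of having developed both the Rieffel imprimitivity result and the identification of $\IndR$ with $\IndV$. Once those are in hand, the statement becomes essentially a formal consequence, and in particular side-steps the strong regularity hypothesis appearing in \cite[Theorem 5.1]{Vaes-induction}.

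For the ``only if'' direction, I would assume $U\cong\IndV(X)$ for some unitary representation $X$ of $\HH$ on a Hilbert $B$-module $\cE_0$. The construction of $\IndV$ recalled in Subsection \ref{vaesrep} supplies, via the left $\GG\ltimes\Linf(\GG/\HH)$-module structure on the intermediate bimodule $\widetilde{\cF}$, the canonical map $\rho$ of \eqref{covrepVaes}: by construction this is a strictly continuous $\IndV(X)$-covariant representation of $\Linf(\GG/\HH)$ on $\IndV(\cE_0)$. Transporting $\rho$ along the unitary equivalence $U\cong\IndV(X)$ yields the required covariant representation on $\cF$.

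For the ``if'' direction, I would start from a strictly continuous $U$-covariant representation $\rho:\Linf(\GG/\HH)\to\cL(\cF)$ and apply Theorem \ref{condition}. That theorem produces the invariant submodule $\cE_0=\rho(\I_\HH)\cF$ carrying a natural unitary representation $X$ of $\HH$, together with a unitary $T\in\cL(\IndR(\cE_0),\cF)$ intertwining the $\C_0^\uu(\hh{\GG})$-actions; in particular $U\cong\IndR(X)$. Theorem \ref{mainres} then yields $\IndR(X)\cong\IndV(X)$, and hence $U$ is unitarily equivalent to the representation induced from $X$ in the sense of Vaes.

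The hard technical content is thus entirely absorbed into the already-proved Theorems \ref{condition} and \ref{mainres}, so no substantial obstacle is expected. The only point that deserves a careful check is that the covariant representation produced in the ``only if'' direction is indeed strictly continuous; but this is built into the von Neumann-module framework in which $\IndV$ is constructed in Subsection \ref{vaesrep}, since the representation $\rho$ arises from the strict homomorphisms associated with the bicovariant correspondence \eqref{eq.ourcorresp} and the identification \eqref{box-iden}.
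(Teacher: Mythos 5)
Your proposal is correct and follows exactly the paper's argument: the "only if" direction comes from the covariant representation $\rho$ of \eqref{covrepVaes} built into the construction of $\IndV$, and the "if" direction applies Theorem \ref{condition} to realize $U$ as $\IndR$ of the representation on $\rho(\I_\HH)\cF$ and then invokes Theorem \ref{mainres} to pass from $\IndR$ to $\IndV$. No gaps.
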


\begin{proof}
The existence of a strictly continuous covariant representation of $\Linf(\GG/\HH)$ for an induced representation was noted in the construction of $\IndV$
in \cite[Section 4]{Vaes-induction}.

Conversely, if $U\in\Rep(\GG,\cF)$ is such that $\cF$ admits a strictly continuous covariant representation of $\Linf(\GG/\HH)$, then by Proposition \ref{condition} there is a unitary representation $U\in\Rep(\HH,\cE)$ of $\HH$ on a Hilbert module $\cE$ such that $(U,\cF)=\bigl(\IndR(V),\IndR(\cE)\bigr)$, and hence the result follows from Theorem \ref{mainres}.
\end{proof}

\renewcommand{\theequation}{\Alph{section}.\arabic{equation}}
\renewcommand{\thesection}{\Alph{section}}
\setcounter{section}{0}
\renewcommand{\theproposition}{\Alph{section}.\arabic{proposition}}
\setcounter{proposition}{0}

\section*{Appendix: Continuity of induction}
\setcounter{section}{1}

The results of this paper in particular imply that all properties of induced representations from open quantum subgroups are shared between all three processes. In particular, in that case all continuity and stability properties proved in \cite{rief1} for $\IndR$ also hold for $\IndK$ and $\IndV$.

A particular important property of $\IndR$ is the continuity, i.e.~preservation of weak containment, which has applications in various representation theoretical problems. As a consequence of Theorem \ref{mainres} the weak containment is preserved under $\IndV$ in the case of induced representations from open quantum subgroups.

In this appendix we show this result holds for $\IndV$ in general, i.e.~for Vaes-induced representations from closed quantum subgroups.

\begin{theorem}\label{ind_cont}
Let $\HH$ be a closed quantum subgroup of a locally compact quantum group $\GG$, and let $U_1\in\Rep(\HH,\sH_1)$ and $U_2\in\Rep(\HH,\sH_2)$ be unitary representations of $\HH$ on Hilbert spaces $\sH_1$ and $\sH_2$. If $U_1\preccurlyeq{U_2}$ then $\IndV(U_1)\preccurlyeq\IndV(U_2)$.
\end{theorem}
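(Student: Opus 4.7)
The plan is to trace Vaes's construction of $\IndV$ and verify that weak containment is preserved at each stage.

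The key starting observation is that the defining equation $(\id\tens\pil_i)\ww^\HH=(\id\tens\hh\pi)(\ww^\HH)_{12}(U_i)_{13}$ identifies the restriction of $\pil_i$ to $\C_0^\uu(\hh\HH)$ with the representation of $\C_0^\uu(\hh\HH)$ associated to the external tensor-product representation $Y\tens U_i$ of $\HH$ on $\Ltwo(\GG)\tens\sH_i$, where $Y=(\id\tens\hh\pi)(\ww^\HH)\in\Linf(\HH)\vtens\Linf(\hh\GG)$ is the representation of $\HH$ on $\Ltwo(\GG)$ coming from the dual inclusion $\hh\pi\colon\Linf(\hh\HH)\hookrightarrow\Linf(\hh\GG)$. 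Since tensoring with a fixed representation preserves weak containment of quantum group representations, as can be verified at the level of positive-definite coefficients using the weak-$*$ continuity of the convolution induced on states by the coproduct $\Delta_{\hh\HH}^\uu$ and the coefficient functional of $Y$, the hypothesis $U_1\preccurlyeq U_2$ immediately yields $\pil_1\preccurlyeq\pil_2$ as representations of $\C_0^\uu(\hh\HH)$.

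Next, I would propagate this through the internal tensor product $\widetilde{\cF}_i=\cI\btens_{\pil_i}(\Ltwo(\GG)\tens\sH_i)$. Since this is a Rieffel-induction-type construction through the imprimitivity bimodule $\cI$, and Rieffel's theorem on continuity of induction (\cite[Section 5]{rief1}) carries over to the Hilbert-module and von Neumann correspondence settings used in Vaes's appendix, the weak containment propagates to the corresponding representations of $\GG\ltimes\Linf(\GG/\HH)$ on $\widetilde{\cF}_1$ and $\widetilde{\cF}_2$.

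The final step is to transport this to the desired weak containment $\IndV(U_1)\preccurlyeq\IndV(U_2)$ via the bicovariant correspondence identification $\widetilde{\cF}_i\cong\Ltwo(\GG)\tens\IndV(\sH_i)$ of \cite[Proposition 3.7]{Vaes-induction}. Re-applying the starting observation at the $\GG$-level, the left $\Linf(\hh\GG)$-action on $\Ltwo(\GG)\tens\IndV(\sH_i)$ corresponds to the external tensor-product representation $Y_\GG\tens\IndV(U_i)$, where $Y_\GG$ is the left regular representation of $\GG$ on $\Ltwo(\GG)$, and $\phi_{\IndV(U_i)}$ is to be extracted from this.

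The main obstacle will be this last extraction: recovering $\phi_{\IndV(U_i)}$ from $Y_\GG\tens\IndV(U_i)$ cannot be accomplished by naive slicing with a vector state of $Y_\GG$, since the trivial representation of $\GG$ is not weakly contained in the left regular representation unless $\GG$ is amenable. One therefore has to argue more carefully, for example by working directly at the level of kernel inclusions and exploiting the functoriality of the bicovariant identification from \cite{Vaes-induction} with respect to quotients by ideals of $\C_0^\uu(\hh\GG)$, so that the required inclusion $\ker\phi_{\IndV(U_2)}\subset\ker\phi_{\IndV(U_1)}$ transfers from the corresponding inclusion already established at the level of the left action on $\widetilde{\cF}_i$.
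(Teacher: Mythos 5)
There is a genuine gap, and it is precisely the one you flag at the end — but it is fatal to the route you chose, not a technical loose end. The weak containments you can establish along your chain concern the left $\Linf(\hh{\GG})$-legs of the correspondences $\Ltwo(\GG)\tens\,\cdot\,$, and by \eqref{Defpli} (applied with $\HH=\GG$) the left leg of the target correspondence $\Ltwo(\GG)\tens\IndV(\sH_i)$ is the representation of $\C_0^\uu(\hh{\GG})$ associated to the tensor product $\ww^\GG\top\IndV(U_i)$ of the regular representation with $\IndV(U_i)$. By the quantum Fell absorption principle (from $(\Delta_\GG\tens\id)(U)=U_{13}U_{23}$ and $\Delta_\GG=\ww^\GG{}^*(\I\tens\cdot)\ww^\GG$ one gets $\ww^\GG_{12}U_{13}=U_{23}\ww^\GG_{12}U_{23}^*$), such a tensor product is unitarily equivalent to an amplification of $\ww^\GG$ for \emph{every} $U$, so the kernel inclusion you arrive at after the identification $\widetilde{\cF}_i\cong\Ltwo(\GG)\tens\IndV(\sH_i)$ is an automatic equality and carries no information whatsoever about $\ker\phi_{\IndV(U_i)}$. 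The data that pins down $\IndV(U_i)$ inside the correspondence is exactly the extra bicovariance structure (the $\Linf(\GG)'$-action and the covariance relations of \cite[Proposition 3.7]{Vaes-induction}), which your kernel bookkeeping does not track; and the suggested repair via ``functoriality with respect to quotients by ideals of $\C_0^\uu(\hh{\GG})$'' is not available, since the objects entering the identification ($\Linf(\hh{\GG})$, $\GG\ltimes\Linf(\GG/\HH)$, $\Linf(\GG)'$) are von Neumann algebraic and do not pass to such quotients — nor is there any nontrivial inclusion ``already established'' to transfer. For the same reason the intermediate claim about weak containment of the representations of the von Neumann algebra $\GG\ltimes\Linf(\GG/\HH)$ on $\widetilde{\cF}_1$, $\widetilde{\cF}_2$ is not a meaningful invariant here. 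Your first step (tensoring with a fixed representation preserves weak containment) is correct, but it does not lead anywhere along this path.

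The paper's proof bypasses the internal structure of the construction entirely and is much shorter: apply $\IndV$ to the universal representation $\wW^\HH\in\Rep(\HH,\C_0^\uu(\hh{\HH}))$, where $\C_0^\uu(\hh{\HH})$ is viewed as a Hilbert module over itself, and use \cite[Proposition 4.7]{Vaes-induction} (compatibility of induction with internal tensor products) together with $\sH_i\cong\C_0^\uu(\hh{\HH})\btens_{\phi_{U_i}}\sH_i$ to identify $\IndV(\sH_i)\cong\IndV\bigl(\C_0^\uu(\hh{\HH})\bigr)\btens_{\phi_{U_i}}\sH_i$ as $\C_0^\uu(\hh{\GG})$-modules, the module action being inherited from the single module $\IndV\bigl(\C_0^\uu(\hh{\HH})\bigr)$. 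Then every vector state of $\phi_{\IndV(U_1)}$ has the form $\hh{a}\mapsto\scp{v}{\scp{\xi}{\hh{a}\xi}_{\C_0^\uu(\hh{\HH})}v}$ with $\xi\in\IndV\bigl(\C_0^\uu(\hh{\HH})\bigr)$ and $v\in\sH_1$, and the hypothesis $U_1\preccurlyeq U_2$ allows one to approximate $\omega_v$ by convex combinations of vector states coming from $\sH_2$ and renormalize, exactly as in \cite[Proposition 6.26]{rief1}; this gives $\ker\phi_{\IndV(U_2)}\subset\ker\phi_{\IndV(U_1)}$ directly. If you want to salvage your write-up, replace the passage through $\widetilde{\cF}_i$ by this reduction to the universal module.
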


\begin{proof}
Consider the universal representation $\wW^\HH\in\Rep(\HH,\C_0^\uu(\hh{\HH}))$ of $\HH$, where $\C_0^\uu(\hh{\HH})$ is seen as
a Hilbert module over itself. Then $\IndV\bigl(\C_0^\uu(\hh{\HH})\bigr)$ is also a Hilbert $\C_0^\uu(\hh{\HH})$-module. Under the canonical identifications
$\displaystyle\C_0^\uu(\hh{\HH})\btens_{\phi_{U_i}}\sH_i \cong \sH_i$, $i =1, 2$, we have $U_i=\wW^{\HH}\btens\limits_{\phi_{U_i}}\I$, and it follows from \cite[Proposition 4.7]{Vaes-induction} that
\[
\IndV(\sH_i)\cong\IndV\bigl(\C_0^\uu(\hh{\HH})\btens_{\phi_{U_i}}\sH_i\bigr)\cong\IndV\bigl(\C_0^\uu(\hh{\HH})\bigr)\btens_{\phi_{U_i}}\sH_i
\]
as $\C_0^\uu(\hh{\GG})$-modules, where in the last term the module action is inherited from the $\C_0^\uu(\hh{\GG})$-module structure of $\IndV\bigl(\C_0^\uu(\hh{\HH})\bigr)$.

Thus, we may now repeat in this context a similar argument to that in the proof of \cite[Proposition 6.26]{rief1} to conclude the result.

Let $\xi\in\IndV\bigl(\C_0^\uu(\hh{\HH})\bigr)$ and $v\in\sH_1$ be such that $\zeta=\xi\tens{v}\in\IndV(\sH_1)$ is of unit length. Denote by $\omega_\zeta$ the corresponding vector state. Then
\[
\omega_\zeta(\hh{a})=\scp{v}{\scp{\xi}{\hh{a}\xi}_{\C_0^\uu(\hh{\HH})}v}
\]
for all $\hh{a}\in\C_0^\uu(\hh{\GG})$. By the assumption there is a net $\bigl\{\nu_j=\sum_{i=1}^{n_j} r_{ij}\omega_{v_{ij}}\bigr\}_{j\in\Jnd}$ of convex combinations of vector states on $\B(\sH_2)$ which converges in the weak${}^*$ topology to $\omega_{v}$. Since
\[
\lim_{j\in\Jnd}\nu_j\bigl(\scp{\xi}{\xi}_{\C_0^\uu(\hh{\HH})}\bigr)=\omega_{v}\bigl(\scp{\xi}{\xi}_{\C_0^\uu(\hh{\HH})}\bigr)
=\scp{\xi\tens{v}}{\xi\tens{v}}=1,
\]
we may assume $\nu_j\bigl(\scp{\xi}{\xi}_{\C_0^\uu(\hh{\HH})}\bigr)\neq{0}$ for all $j\in\Jnd$. For those $\xi\tens{v_{ij}}$ that are nonzero in
$\IndV\bigl(\C_0^\uu(\hh{\HH})\bigr)\btens\limits_{\phi_{U_2}}\sH_2$, we let
\[
\zeta_{ij}=\frac{\xi\tens{v_{ij}}}{\|\xi\tens{v_{ij}}\|}
\]
and set
\[
s_{ij}=\frac{r_{ij}\|\xi\tens{v_{ij}}\|^2}{\nu_j(\scp{\xi}{\xi}_{\C_0^\uu(\hh{\HH})})}.
\]
Then the net $\bigl\{\widetilde{\nu}_j=\sum_is_{ij}\omega_{\zeta_{ij}}\bigr\}_{j\in\Jnd}$ (the sums are taken over non-zero terms) converges in the weak${}^*$ topology to $\omega_\zeta$, and this completes the proof.
\end{proof}

\bibliography{induction}{}
\bibliographystyle{plain}

\end{document}